\newtheorem{theorem}{Theorem}[section]
\newtheorem*{lemma*}{Lemma}
\theoremstyle{definition}
\newtheorem{remark}{Remark}
\newtheorem*{example*}{Example}
\def\vc#1{\mathbf #1}
\def\mymatrix#1{\begin{bmatrix}#1\end{bmatrix}}
\def\O{{\mathcal O}}
\def\F{{\mathcal F}}
\long\def\ignore#1{}
\def\tt{\theta}
\def\cc{\gamma}
\def\H{{\mathcal H}}
\def\ldot{\odot}
\def\nd{\noindent}
\def\fref#1{Figure~\ref{#1}}
\def\ss{\sigma}
 \providecommand{\Pic}{\mathop{\rm Pic}\nolimits}
\def\Bbb#1{{\mathbb #1}}
\def\E{{\mathcal E}}
\def\dd{\delta}
\def\CC{{\Gamma}}
\def\LL{\Lambda}
\def\K{{\mathcal K}}
\def\L{{\mathcal L}}
\def\proj{{\rm{proj}}}
\def\ll{\lambda}
\def\H{{\mathcal H}}
\def\odot{\circ}
\def\P{{\mathcal P}}
\begin{document}

\title{A game of packings}
\author{Arthur Baragar}
\author{Daniel Lautzenheiser}
\begin{abstract}
In this note, we investigate an infinite one parameter family of circle packings, each with a set of three mutually tangent circles.  We use these to generate an infinite set of circle packings with the {\em Apollonian property}.  That is, every circle in the packing is a member of a cluster of four mutually tangent circles.   
\end{abstract}
\subjclass[2010]{52C26, 22E40, 14J28, 14J50, 20H15, 11H31} \keywords{Apollonius, Apollonian, circle packing, sphere packing, K3 surface, ample cone, lattice}
\address{Department of Mathematical Sciences, University of Nevada, Las Vegas, NV 89154-4020}
\email{baragar@unlv.nevada.edu}
\address{Eastern Sierra College Center,
4090 W. Line Street,
Bishop, CA 93514-7306}
\email{daniel.lautzenheiser@cerrocoso.edu}
\thanks{\nd \LaTeX ed \today.}

\maketitle

Let us play a game.  Suppose we are given a circle, and inside that circle we have three mutually tangent discs, two of which are tangent to the outside circle.  The challenge is to fill in the remaining space with discs in a logical and symmetric way, as is done in \fref{fig1}.  The rules are vague, but will become clearer as we learn more about the game.     
\begin{figure}
\begin{center}
\includegraphics[width=120pt]{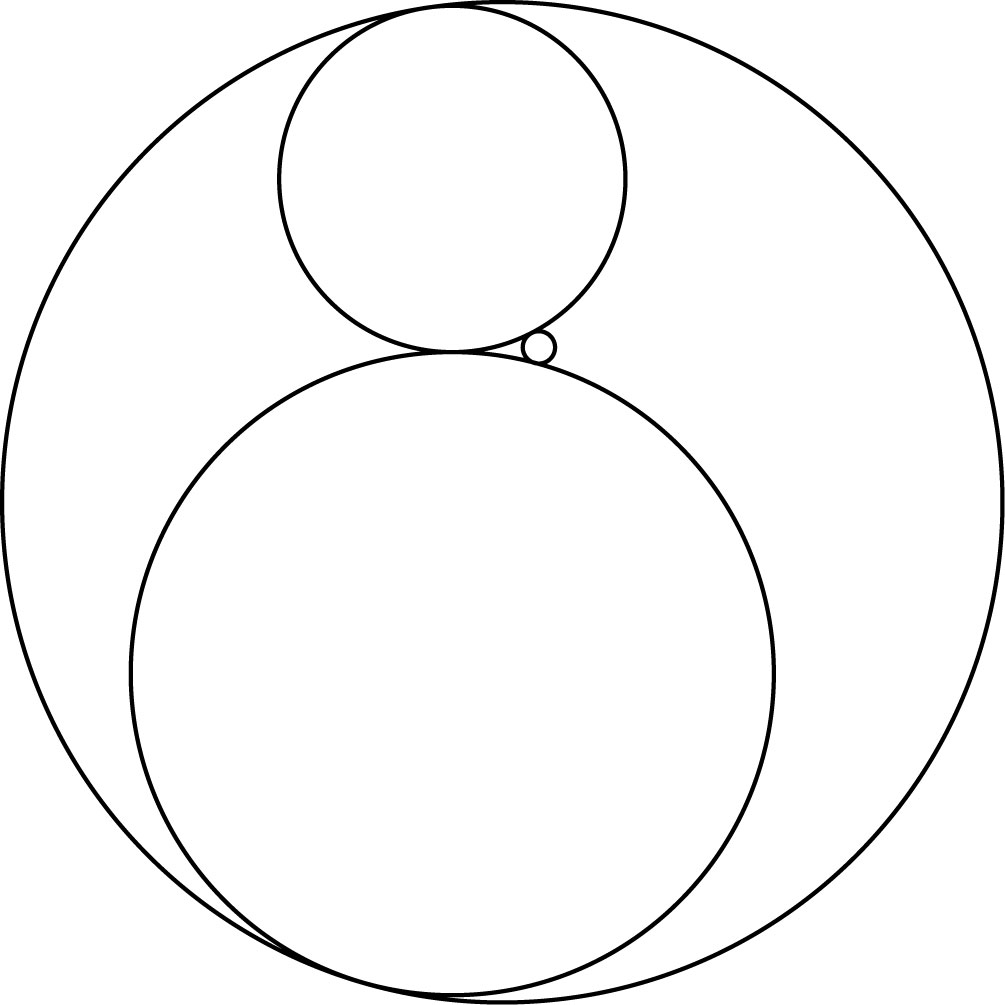}\hspace{.1in}\includegraphics{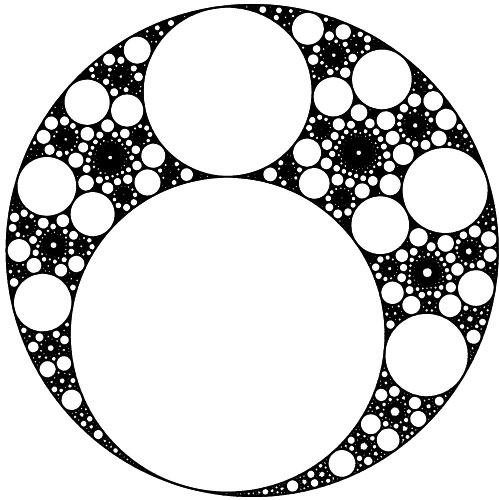}
\end{center}
\caption{\label{fig1}The challenge:  Given an initial configuration, fill in the remaining space so as to have a nice circle packing.}
\end{figure}
The initial setup has, modulo inversion, a one dimensional degree of freedom.  To see this, we invert in the point of tangency of the two discs that are tangent to the original circle.  This gives us the strip version in \fref{fig2}.  
\begin{figure}
\includegraphics[width=360pt]{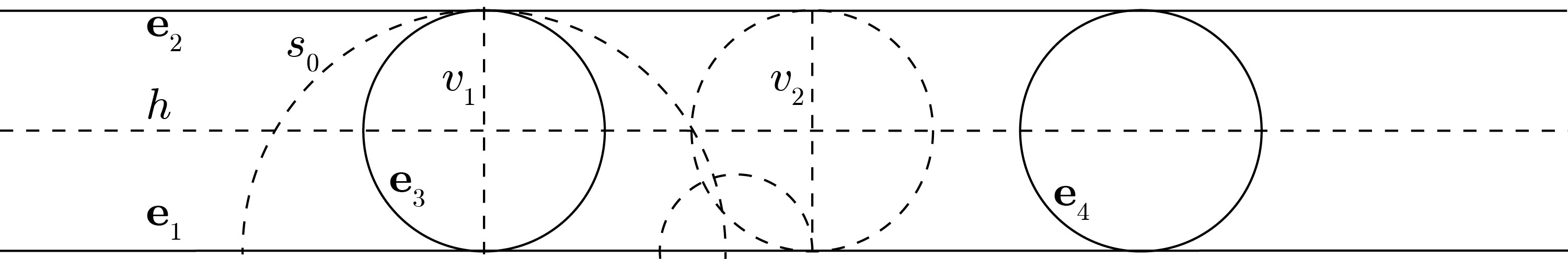}  \\
\includegraphics[width=360pt]{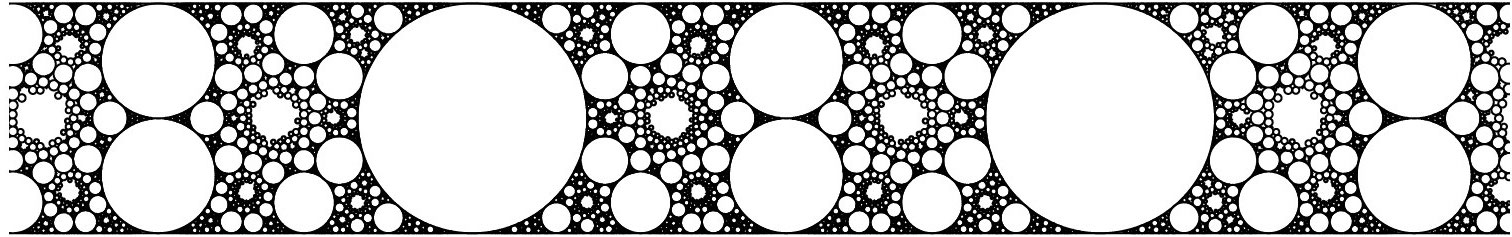}
\caption{\label{fig2} The same configurations as in \fref{fig1}, after inverting in a point of tangency.  The packing is the image of a solid line under the action of the group generated by the inversions and reflections represented by the dotted lines.  In this case $d=1+\sqrt{3}$, so it is not one of the infinite set of packings described in Theorem~\ref{t1}.}
\end{figure}
In the top picture (of \fref{fig2}), we let $d$ be the ratio of the distance between the centers of the two circles and the diameter of the circles.  The game can be won if $d=\sqrt n$ for $n$ a positive integer (see Theorem \ref{t1} below).  Our winning strategy for $n=1$ generates the Apollonian circle packing; for $n= 2$ we get a packing that is described in \cite[(2.3) on p. 394]{Boy74}, and appears in \cite[Figure 3]{Man91} and \cite[Figure 3]{GM10}; and for $n= 3$ we get, modulo inversion, the cross section of the Soddy sphere packing that appears in \cite[Figure 1]{Sod37}.  The case $n=6$ appears in \cite[Figure 7]{Bar18} and \cite[Figure 3]{KN19}.  The cases $n=10$ and $14$ show up in a similar game studied by \cite{CCS19}.  We have not seen the other packings in print.  

While Theorem \ref{t1} tells us that the game can be won for infinitely many $d$, it does not give us a complete strategy.  Solving the problem for successive integers $n=d^2$ is a combination of number theory and geometry, and has a flavor similar to the work of Bianchi \cite{Bia92}.  Once solved, a geometric argument allows us to play a new game:  Suppose we have four mutually tangent circles.  Fill in the remaining space in a nice way.  Contrary to what we had imagined, the Apollonian packing is not the only solution.  See for example the packings in \fref{fig3}.  The first is a blend of the Apollonian packing with Boyd's Example (2.3) \cite{Boy74}, and the second is a blend with a cross section of the Soddy packing.  
\begin{figure}
\begin{center}
\includegraphics{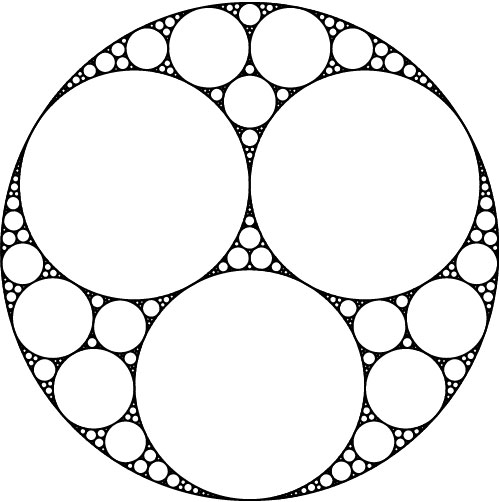} 
\hspace{10pt}
\includegraphics{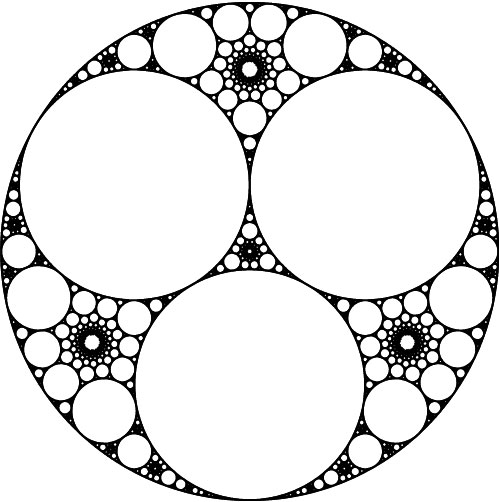}
\caption{\label{fig3} Two circle packings that have the property that every circle is a member of a cluster of four mutually tangent circles.}
\end{center}
\end{figure} 
These packings have what we call the {\em Apollonian property}:  Every circle is a member of a cluster of four mutually tangent circles.  This geometric blending can be done in many ways.     

As in previous works, we think of circles as representing planes in the Poincar\'e upper half-space model of $\Bbb H^3$, a view that dates back to \cite{Max82}.  Packings that are equivalent under inversion are thought of as different perspectives of the same infinite sided ideal polyhedron in $\Bbb H^3$.  Each perspective depends on the choice of point for the point at infinity.  

There are probably many ways of winning this game.  Our approach in Section~\ref{s2} is lattice based and is inspired by results in arithmetic geometry.  For the circle packings guaranteed by Theorem \ref{t1}, there is a perspective so that all circles have integer curvature.  After blending, though, this property is usually lost.       

\subsection*{Acknowledgements} This material is based upon work supported by the National Science Foundation under Grant No. DMS-1439786 and the Alfred P. Sloan Foundation award G-2019-11406 while the first author was in residence at and the second author was visiting the Institute for Computational and Experimental Research in Mathematics in Providence, RI, during the Illustrating Mathematics program.  The first author wishes to thank his home institution, UNLV, for its sabbatical assistance during the Fall of 2019.  The figures in this paper were produced using McMullen's Kleinian groups program \cite{McM}.

\section{Background}
\subsection{The vector model of hyperbolic geometry}
Circles in $\Bbb R^2$ can be represented by $4$-dimensional vectors, an observation that dates back to Clifford and Darboux \cites{Boy73, Boy74}.  The more modern interpretation is that they represent planes in $\Bbb H^{3}$ imbedded in a $4$-dimensional Lorentz space, which in turn represent circles (or lines) on the boundary of the Poincar\'e upper half-space model of $\Bbb H^3$.  

Given a symmetric matrix $J$ with signature $(1,3)$, we define the Lorentz space $\Bbb R^{1,3} $ to be the set of $4$-tuples over $\Bbb R$ equipped with the negative Lorentz product
\[
\vc u\cdot \vc v=\vc u^TJ\vc v.
\]
The surface $\vc x\cdot \vc x=1$ is a hyperboloid of two sheets.  Let us distinguish a vector $D$ with $D\cdot D>0$ and select the sheet $\H$ by:
\[
\H: \qquad \vc x\cdot \vc x=1, \qquad \vc x\cdot D>0.
\]
We define a distance on $\H$ by
\[
\cosh(|AB|)=A\cdot B.
\]
Then $\H$ equipped with this metric is a model of $\Bbb H^{3}$, sometimes known as the {\it vector model}.   Equivalently, one can define
\[
V=\{\vc x\in \Bbb R^{1,3}: \vc x\cdot \vc x>0\}
\]
and $\H=V/\Bbb R^*$, together with the metric defined by
\[
\cosh(|AB|)=\frac{|A\cdot B|}{|A||B|},
\]
where $|\vc x|=\sqrt{\vc x\cdot \vc x}$ for $\vc x\in V$.  For $\vc x\cdot \vc x<0$, we define $|\vc x|=i\sqrt{-\vc x\cdot \vc x}$.

Planes in $\H$ are the intersection of $\H$ with hyperplanes $\vc n\cdot \vc x=0$ in $\Bbb R^{1,3}$.  Such a hyperplane intersects $\H$ if and only if $\vc n\cdot \vc n<0$.  Let $H_{\vc n}$ represent both the hyperplane $\vc n\cdot \vc x=0$ in $\Bbb R^{1,3}$ and its intersection with $\H$.  The direction of $\vc n$ distinguishes a half space
\[
H_{\vc n}^+=\{\vc x: \vc n\cdot \vc x\geq 0\},
\]
in either $\Bbb R^{1,3}$ or $\H$.

The angle $\tt$ between two intersecting planes $H_{\vc n}$ and $H_{\vc m}$ in $\H$ is given by
\begin{equation}\label{eq1}
|\vc n||\vc m|\cos \tt=\vc n\cdot \vc m,
\end{equation}
where $\tt$ is the angle in $H_{\vc n}^+\cap H_{\vc m}^+$.  If $|\vc n\cdot \vc m|=||\vc n||\vc m||$, then the planes are tangent at infinity.  If $|\vc n\cdot \vc m|>||\vc n||\vc m||$, then the planes do not intersect, and the quantity $\psi$ in  $|n||m|\cosh \psi=|\vc n\cdot \vc m |$ is the shortest hyperbolic distance between the two planes.

The group of isometries of $\H$ is given by
\[
\O^+(\Bbb R)=\{T\in M_{4\times 4}: \hbox{$T\vc u\cdot T\vc v=\vc u\cdot \vc v$ for all $\vc u, \vc v\in \Bbb R^{1,3}$, and $T\H=\H$}\}.
\]
Reflection in the plane $H_{\vc n}$ is given by
\begin{equation}\label{eq2}
R_{\vc n}(\vc x)=\vc x-2\proj_{\vc n}(\vc x)=\vc x-2\frac{\vc n\cdot \vc x}{\vc n\cdot \vc n}\vc n.
\end{equation}
The group of isometries is generated by the reflections.

Let $\partial \H$ represent the boundary of $\H$, which is a $3$-sphere.  It is represented by $\L^+/\Bbb R^+$ where
\[
\L^+=\{\vc x\in \Bbb R^{1,3}: \vc x\cdot \vc x=0, \vc x\cdot D>0\}.
\]
Given an $E\in \L^+$, let $\partial \H_E=\partial \H \setminus E\Bbb R^+$.  Then $\partial \H_E$ equipped with the metric $|\cdot |_E$ defined by
\begin{equation}\label{leng}
|AB|_E^2=\frac{\dd^2A\cdot B}{(A\cdot E)(B\cdot E)}
\end{equation}
is the Euclidean plane that is the boundary of the Poincar\'e upper half-space model of $\H$ with $E$ the point at infinity.  The quantity $\dd$ is an arbitrary scaling constant.  In $\partial \H_E$, the plane $H_{\vc n}$ is represented by a circle (or line), which we denote with $H_{\vc n,E}$ (or just $H_{\vc n}$ if $E$ is understood, or sometimes just $\vc n$).

The curvature (the inverse of the radius, together with a sign) of $H_{\vc n, E}$ is given by the formula
\begin{equation}\label{eq3}
\frac{\vc n\cdot E\sqrt 2}{\dd||\vc n||}
\end{equation}
using the metric $|\cdot |_E$ \cite{Bar18}.  Here, $||\vc n||=-i|\vc n|=\sqrt{-\vc n\cdot \vc n}$.    By choosing a suitable orientation for $\vc n$, we get the appropriate sign for the curvature.

\subsection{Circle packings}
The following definitions are due to or inspired by Maxwell \cite{Max82}.  We say $\P \subset \Bbb R^{1,3}$ is a {\em packing} if for all $\vc n,\vc n'\in \P$, there exists a $k>0$ so that $\vc n\cdot \vc n=-k$ and $\vc n\cdot \vc n'\geq k$.  The first condition guarantees that $H_{\vc n}$ is a plane in $\H$, and hence represents a circle on $\partial H_E$.  The second condition guarantees that distinct circles are either tangent or do not intersect.  (Note that if $\vc n\cdot \vc n'< -k$, then the circles do not intersect, but their associated sides overlap.)  We ignore trivial packings of the form $\{\vc n,-\vc n\}$.  

A packing $\P$ is {\em maximal} if we cannot add a vector to $\P$ and have it still be a packing.  Geometrically, this means there is no space left over where one can place another circle (so we sometimes use the term {\em dense}).  

Given a point $E\in \partial \H$, the set 
\[
\P_E=\{H_{\vc n,E}: \vc n\in \P\}
\]
is what is traditionally thought of as a circle packing.  We call $\P_E$ a {\em perspective} of $\P$.  The packings in Figures \ref{fig1} and \ref{fig2} are different perspectives of the same packing.  

A packing is {\em lattice like} if $\P \Bbb Z$ is a lattice in $\Bbb R^{1,3}$.  

\subsection{Lattice based circle packings}
  Suppose 
\[
\LL=\vc e_1\Bbb Z \oplus \vc e_2\Bbb Z \oplus \vc e_3\Bbb Z\oplus \vc e_4\Bbb Z \subset \Bbb R^{1,3}
\]
and 
\[
J_{\LL}=[\vc e_i\cdot \vc e_j]
\]
has integer entries.  Let us fix $k$ (usually $1$ or $2$) so that there exists at least one $\vc n\in \LL$ such that $\vc n\cdot \vc n=-k$; and a $D$ so that $D\cdot D>0$ and $D\cdot \vc n\neq 0$ for all $\vc n\in \LL$ with $\vc n\cdot \vc n=-k$.  Such a $D$ exists.  Let us define
\[
\E_{-k}=\{\vc n\in \LL: \vc n\cdot \vc n=-k, \vc n\cdot D>0\}.
\]
Many of the discs in $\E_{-k}$ overlap, even if they do not intersect.  The set is analogous to the Apollonian super packing \cite{GLM06}.  We can refine this set by defining the cone
\[
\K_{-k}=\bigcap_{\vc n\in \E_{-k}}H_{\vc n}^+
\]
and defining the set
\[
\E_{-k}^*=\{\vc n\in \E_{-k}: H_{\vc n} \hbox{ is a face of $\K_{-k}$}\}.
\]
If for all $\vc n$, $\vc n'\in \E_{-k}^*$, $\vc n\cdot \vc n'\geq k$, then $\E^*_{-k}$ is a packing.  If $E\in \LL$, then, with a suitable choice for $\dd$, every circle in the perspective $\E^*_{-k,E}$ has integer curvature (see Equation (\ref{eq3})). 

Let 
\[
\O_\LL^+=\{T\in \O^+(\Bbb R): T\LL=\LL\}.
\]
Note that $\O_\LL^+$ is an arithmetic group, so it has a convex polyhedral fundamental domain with a finite number of faces and a finite volume.  As a consequence, 
if $\E_{-k}^*$ is a packing, then the packing is maximal.  We sketch the proof:  Let $\vc n$ satisfy $\vc n\cdot \vc n=-k$.  Note that the image of $\vc n$ under the action of $\O_{\LL}^+$ is dense on $\partial \H$, since the fundamental domain for $\O_{\LL}^+$ has finite volume.  If $\E_{-k}^*$ is not maximal, then there exists an $\vc m$ so that $\vc m\cdot \vc m=-k$, $\vc m\cdot D>0$, and $H_{\vc m}^-\subset \K_{-k}$.  By density, there exists an $\vc n\in \E_{-k}$ so that $H_{\vc n}^-\subset H_{\vc m}^-$.  But $\K_{-k}\subset H_{\vc n}^+$, a contradiction.  

\begin{remark}  For a K3 surface $X$ with Picard number $\rho\geq 4$, let $\LL=\Pic(X)$ and let $D$ be ample.  Then $\E_{-2}$ is the set of effective $-2$ divisors on $X$, and $\E_{-2}^*$ is the set of irreducible $-2$ curves on $X$.  The cone $\K_{-2}$ is the ample cone for $X$.    

In general, $\K_{-2}$ does not yield a packing, as it may have edges.  However, using a result of Morrison \cite{Mor84}, there exist plenty of K3 surfaces where this does not happen, including the infinite set described in Theorem \ref{t1}.
\end{remark}

\section{Playing the game}\label{s2}

In \fref{fig2}, we labeled the circles in our initial configuration with $\vc e_1$, ..., $\vc e_4$.  Let us set $\vc e_i\cdot \vc e_i=-2$, so $\vc e_i\cdot \vc e_j=2$ if $i\neq j$ and the circles are tangent (see Equation (\ref{eq1})).  Let $\vc e_3\cdot \vc e_4=a$, which depends on our variable $d$.  Let 
\[
J_{d^2}=[\vc e_i\cdot \vc e_j]=\mymatrix{-2&2&2&2 \\ 2&-2&2&2 \\ 2&2&-2&a \\ 2&2&a&-2}.
\]
The point of tangency between tangent circles $\vc e_i$ and $\vc e_j$ is $\vc e_i+\vc e_j$, so our point at infinity in \fref{fig2} is $E=\vc e_1+\vc e_2$.  Let $P_3$ and $P_4$ be the centers of the circles $\vc e_3$ and $\vc e_4$, so 
\[
 P_i=R_{\vc e_i}(E)=E+4\vc e_i.
 \]
 Using Equations (\ref{eq3}) and (\ref{leng}), $H_{\vc e_3,E}$ has diameter $\dd/2$ and $|P_3P_4|_E=(\dd/4)\sqrt{2+a}$, so 
 \[
 a=4d^2-2.
 \]  
  
\begin{theorem} \label{t1}  Let $n$ be a positive integer and $d=\sqrt n$.  Let $\LL_n$ be the lattice generated by $\{\vc e_1,...,\vc e_4\}$ with $[\vc e_i\cdot \vc e_j]=J_n$.  Then $\E_{-2}^*$ is a maximal circle packing.
\end{theorem}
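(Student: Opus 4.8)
The plan is to verify the two hypotheses isolated in the ``lattice based circle packings'' subsection: first that $\LL_n$ with Gram matrix $J_n$ is a genuine Lorentz lattice (signature $(1,3)$, integral), and second that the cone $\K_{-2}$ is \emph{edge-free}, i.e. that for all $\vc n,\vc n'\in\E_{-2}^*$ one has $\vc n\cdot\vc n'\geq 2$. Granting the second condition, $\E_{-2}^*$ is a packing by definition, and then maximality is automatic by the density argument already sketched in the excerpt (the image of any $-2$ vector under the arithmetic group $\O^+_{\LL_n}$ is dense on $\partial\H$). So the real content is the edge-free condition; signature is a one-line determinant/eigenvalue check on $J_n$, and integrality is visible from the matrix since $a=4n-2\in\Bbb Z$.

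First I would record the basic geometry of the initial configuration. With $E=\vc e_1+\vc e_2$ as point at infinity, Equation~(\ref{eq3}) gives $\vc e_1,\vc e_2$ as lines (curvature $0$) and $\vc e_3,\vc e_4$ as congruent circles of diameter $\dd/2$ tangent to both lines, with $|P_3P_4|_E=(\dd/4)\sqrt{2+a}=(\dd/4)\cdot 2d$, matching the picture in \fref{fig2} (this is exactly the computation forced $a=4d^2-2$). Next I would exhibit the reflections $R_{\vc e_1},\dots,R_{\vc e_4}$ and the reflection $R_{\vc e_5}$ in the vertical line $\vc e_5$ through the point of tangency $\vc e_3+\vc e_4$; the subgroup $\CC_n\subset\O^+_{\LL_n}$ they generate acts on the strip, and $\E_{-2}$ is the $\CC_n$-orbit of the four initial vectors together with their $\E_{-k}$-reflections across $E$. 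The claim that $\K_{-2}$ has no edges is equivalent to: whenever two faces $H_{\vc n},H_{\vc n'}$ of $\K_{-2}$ meet, they meet at a point of $\partial\H$ (tangency), never along a hyperbolic line. Because everything is a single orbit under the arithmetic group, it suffices to check this for the finitely many face-pairs meeting a fixed fundamental chamber — this is where the number theory enters, via the fact that for $n$ a perfect-square-times or arbitrary integer the relevant indefinite binary/quaternary forms represent $-2$ in a controlled way, so the nearest competing circle is always tangent rather than overlapping. Concretely I would show that any $\vc m$ with $\vc m\cdot\vc m=-2$, $\vc m\cdot D>0$, $\vc m\notin\E_{-2}^*$ has $\vc m\cdot\vc n\geq 2$ for every $\vc n\in\E_{-2}^*$ whose circle is ``adjacent'' to $\vc m$'s, by reducing $\vc m$ modulo $\CC_n$ into the strip's fundamental domain and inspecting the finitely many positions.

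The main obstacle is precisely this edge-free verification: a priori an arithmetic reflection group acting on $\Bbb H^3$ can have a fundamental polyhedron with genuine (non-ideal) edges, in which case $\K_{-2}$ would have edges and $\E_{-2}^*$ would fail to be a packing (the excerpt's Remark flags exactly this). What rescues the situation for all integers $n$ is an arithmetic miracle — the Gram matrix $J_n$, with $a=4n-2$, is congruent to a very symmetric form (one finds $\det J_n = 16(something in n)$ and the associated form is, up to scaling, $\langle 2\rangle\oplus$ an indefinite rank-$3$ piece whose $-2$ vectors are completely classifiable), so the competing-circle computation always lands on tangency. I would either carry this out directly by diagonalizing $J_n$ over $\Bbb Z[1/2]$ and solving $\vc x\cdot\vc x=-2$, or invoke Morrison's classification result cited in the Remark to assert that the corresponding K3 lattice has no $(-2)$-curve edges; the first route is self-contained and is the one I would write up, isolating the key inequality $\vc m\cdot\vc n\ge 2$ as a lemma about the form $J_n$ and then feeding it into the general maximality machinery from Section~1.
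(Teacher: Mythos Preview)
Your proposal has the right overall architecture --- verify that $J_n$ defines a Lorentz lattice, then show the cone $\K_{-2}$ has no edges --- but your strategy for the edge-free condition is both overcomplicated and, critically, not uniform in $n$. You propose to reduce modulo a subgroup $\CC_n$ into a fundamental domain and inspect finitely many face-pairs. But as the rest of the paper makes painfully clear (see the case-by-case work for $n=5,7,10,11,\ldots,26$), \emph{finding} that fundamental domain is the hard part and its structure depends on $n$ in an intricate way; there is no uniform description you could feed into a finite check valid for all $n$. So the plan as written does not close.

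What you are missing is a simple arithmetic observation that makes the whole fundamental-domain detour unnecessary. Every entry of $J_n$ is even (the off-diagonal entries are $2$ and $a=4n-2$), so $\LL_n$ is an \emph{even} lattice: $\vc n\cdot\vc n'\in 2\Bbb Z$ for all $\vc n,\vc n'\in\LL_n$. Two faces $H_{\vc n},H_{\vc n'}$ meet along an edge in $\H$ exactly when $|\vc n\cdot\vc n'|<2$, so the only dangerous value is $\vc n\cdot\vc n'=0$. If that held, then $(\vc n-\vc n')\cdot(\vc n-\vc n')=-4$; but a short computation modulo $8$ shows $\LL_n$ never represents $-4$ (one finds $\vc x\cdot\vc x\equiv 0$ or $-2\pmod 8$ for every $\vc x\in\LL_n$, independent of $n$). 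That is the entire proof in the paper: evenness plus a mod~$8$ obstruction, with maximality then following from the general density argument you already cited. No diagonalization over $\Bbb Z[1/2]$, no classification of $-2$ vectors, no Morrison, and above all no fundamental domain.
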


\begin{proof}  
Let $\vc n, \vc n'\in \E_{-2}^*$.  Then $\vc n\cdot \vc n'$ is an even integer, so we need only show $\vc n\cdot \vc n'\neq 0$.  Suppose $\vc n\cdot \vc n'=0$.  Then
\[
(\vc n-\vc n')\cdot (\vc n-\vc n')=-4.
\]
But $\LL_n$ does not represent $-4$ (that is, there is no $\vc x\in \LL_n$ so that $\vc x\cdot \vc x=-4$), which is easy enough to verify by looking at it modulo $8$.  
\end{proof}

This theorem tells us that the game can be won if $d=\sqrt n$ for $n$  a positive integer, but it does not tell us how to win the game.  The strategy is to find generators for $\O_{\LL_n}^+$.  Some of the reflections shown in \fref{fig2} are in this group for all $n$.  The reflection $R_{h}$ where $h=\vc e_2-\vc e_1=[-1,1,0,0]$ is the symmetry that swaps the first and second components (in this basis).  Similarly, $R_{v_2}$ for $v_2=\vc e_4-\vc e_3=[0,0,-1,1]$ swaps the third and fourth components; and $R_{s_0}$ where $s_0=\vc e_3-\vc e_2=[0,-1,1,0]$ swaps the second and third components.  Thus, they are all in $\O_{\LL_n}^+$.  To solve for $v_1$, we note that $v_1\cdot \vc e_i=0$ for $i=1$, $2$ and $3$, so $v_1=[n,n,1,-1]$.  Though $v_1\cdot v_1=-8n$, the reflection $R_{v_1}$ is nevertheless in $\O_{\LL_n}^+$ for all $n$.  Note that $R_{\vc e_1}$ is also in $\O_{\LL_n}^+$, so the group has a convex fundamental domain $\F_n$ that includes $E$ and is bounded by the faces $H_{\vc e_1}$, $H_{h}$, $H_{v_1}$, $H_{v_2}$, and $H_{s_0}$.  For $n=1$, $2$, and $3$, this region has finite volume, so the group $\langle R_{\vc e_1}, R_{h}, R_{v_1}, R_{v_2}, R_{s_0}\rangle$ has finite index in $\O_{\LL_n}^+$.  The groups are in fact equal (for $n=1,2,3$), but for our purposes, it is enough to find a subgroup of finite index in $\O_{\LL_n}^+$.  This is perhaps easier to demonstrate with an example, but first some preliminaries:  Let us set $\dd=4$, so $H_{\vc e_3,E}$ has unit radius, and the distance $|P_3P_4|_E$ between the centers is $2\sqrt n$.  Let us set $D=\vc e_1+\vc e_2+\vc e_3+\vc e_4$, which is a point a Euclidean distance $\sqrt{n+1}$ above the point $Q_0$ on $\partial H_E$ in the Poincar\'e model (see \fref{fig2.1}), so $\vc n \cdot D\neq 0$ for all $\vc n\in\E_{-2}$ (by Equation (\ref{eq3}) and Theorem \ref{t1}).

\begin{example*}[The case $n=5$]  We first find $\O_{\LL_5}^+$.  We have a head start on the fundamental domain $\F_5$, which is shown in \fref{fig2.1}.  
\begin{figure}
\begin{center}
\includegraphics[width=135pt]{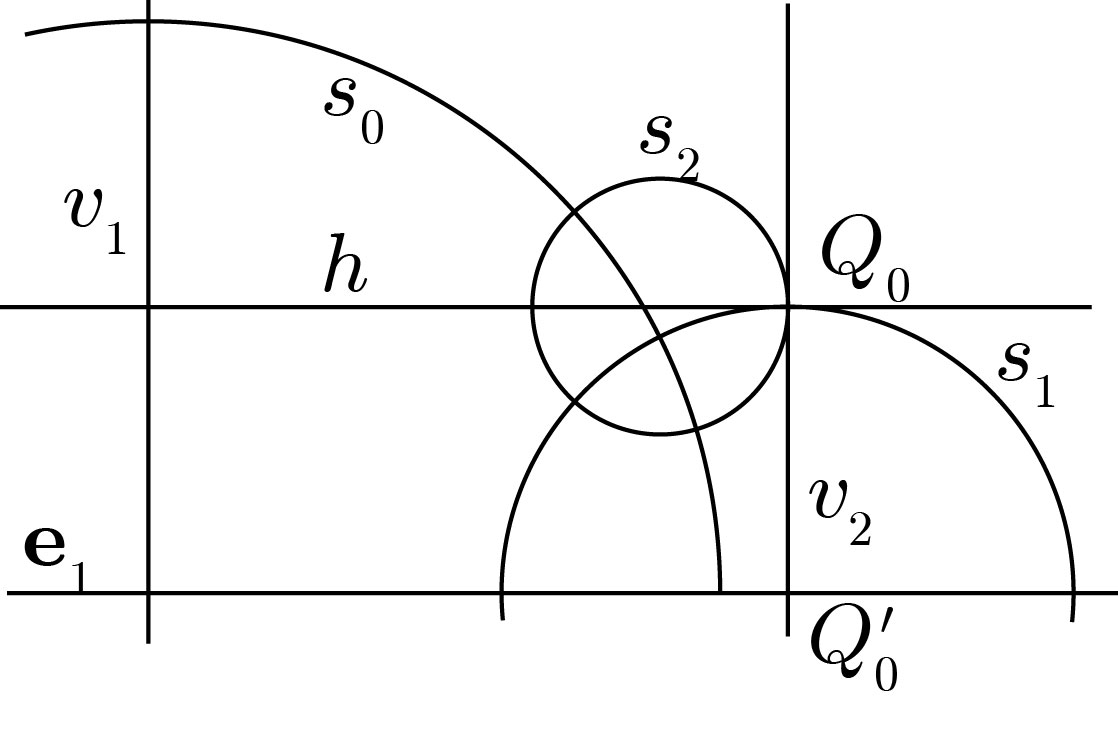}
\caption{\label{fig2.1}  The fundamental domain for $\O_{\LL_5}^+$.  } 
\end{center}
\end{figure}
We need more symmetries of the lattice.  We begin by looking for reflections and taking them to be perpendicular to the faces we already have.  Natural candidates include circles centered at the point labeled $Q_0'$.  The corresponding planes have normal vector a linear combination of $Q_0'$ and $E$.  We find
\[
\vc n_1=[-1,-3,1,1]
\]
gives a symmetry of the lattice.  By design, the circle is centered at $Q_0'$ and from Equation (\ref{eq3}), it has radius $1$.   

It is natural then to assume that $Q_0$ might be a cusp, which would suggest that there is a reflection in a plane that is parallel to $v_2$.  Such planes have normal vectors that are linear combinations of $v_2$ and $Q_0$.  We find
\[
s_2=[-5,-5,3,2]
\]
gives a reflection in $\O_{\LL_5}^+$.  The circle it generates has curvature $\sqrt 5$, is tangent to $v_2$ at $Q_0$ so has center on $h$, and using Equation (\ref{leng}) (applied to its center $R_{s_2}(E)$) we verify that it is the one shown in \fref{fig2.1}.

The resulting region has finite volume, so is a candidate for the fundamental domain of $\O_{\LL_5}^+$.  Our region has two cusps: $E$ and $Q_0$.  We choose our normal vectors to be {\em primitive}, meaning they have coprime coordinates.  The faces at $E$ include one with normal vector $\vc e_1$, which has norm $\vc e_1\cdot \vc e_1=-2$, while the faces at $Q_0$ have normal vectors with norms $-8$, $-8$, $-10$, and $-40$.  Thus, there cannot be a symmetry that sends $Q_0$ to $E$.  If $\O_{\LL_5}^+$ has a symmetry not generated by the faces of our region, then there must exist a symmetry of this region, and since it fixes $E$, it must be a Euclidean symmetry of \fref{fig2.1}.  Clearly no such symmetry exists, so our region is a fundamental domain for $\O_{\LL_5}^+$, and hence
\[
\O_{\LL_5}^+=\langle R_{\vc e_1}, R_h, R_{v_1}, R_{v_2}, R_{s_0}, R_{s_1}, R_{s_2}\rangle.
\]
Thus, $\E_{-2}=\O_{\LL_5}^+(\vc e_1)$, since $\vc e_1$ is the only face with norm $-2$.  (Note that, if a vector $\vc n\in \LL_n$ has  norm $-2$ and intersects a convex fundamental domain for $\O_{\LL_n}^+$, then it must be a face of the fundamental domain, since $R_{\vc n}\in \O_{\LL_n}^+$.)  We set
\[
\CC_5 =\langle R_h,R_{v_1}, R_{v_2}, R_{s_0}, R_{s_1}, R_{s_2} \rangle
\]
(we removed $R_{\vc e_1}$).
Then $\E_{-2}^*=\CC_5(\vc e_1)$.  
\end{example*}

The reflections $R_{s_1}$ and $R_{s_2}$ generalize to all odd $n$.  Let $Q_0'=[4-n,-n,2,2]$ and $s_1=(Q'_0-E)/2$.  Then $R_{s_1}\in \O_{\LL_n}^+$ and on $\partial \H_E$, is inversion in a circle of radius $1$ centered at $Q_0'$.   We use $Q_0=[\frac{1-n}2,\frac{1-n}2,1,1]$ to generate $s_2=nQ_0-v_2$.  Then $R_{s_2}\in \O_{\LL_n}^+$ and on $\partial \H_E$ is inversion in a circle with curvature $\sqrt n$ that is tangent to $v_2$ at $Q_0$.

Something similar works for even $n$ as well.  In this case $Q_0=[1-n,1-n,2,2]$ and $Q_0'=[2-n/2,-n/2,1,1]$.  
If $n\equiv 2 \pmod 4$, then we let $s_1=Q_0'-E$, which has $R_{s_1}\in \O_{\LL_n}^+$ and on $\partial \H_E$ is inversion in a circle of radius $\sqrt 2$ centered at $Q_0'$.  (This finishes the case $n=6$.)  For $n\equiv 0\pmod 4$, we let $s_1=(Q_0-E)/2$.  Then $R_{s_1}\in \O_{\LL_n}^+$, and on $\partial \H_E$ is inversion in a circle of radius one centered at $Q_0$.  (This finishes the case $n=4$.)  Then we can get $s_2=(n/2)Q_0'-v_2$, which generates a reflection in $\O_{\LL_n}^+$ corresponding to a circle of curvature $\sqrt{n/2}$.  However, sometimes one can do better, as in the case $n=12$, where we can let $s_2=(3Q_0-v_2)/2$, which corresponds to a circle with curvature  $\sqrt 3/2$ (and is enough to finish that case).

\begin{example*}[The case $n=7$] We proceed as described above to find $s_1=[-2,-4,1,1]$ and $s_2=[-21,-21,8,6]$ (see \fref{fig5}).  We find a couple more reflections that fill in the remaining gap:  $s_3=[-49,-63,27,15]$ and $s_4=[-4,-4,2,1]$.  
\begin{figure}
\begin{center}
\includegraphics[width=360pt]{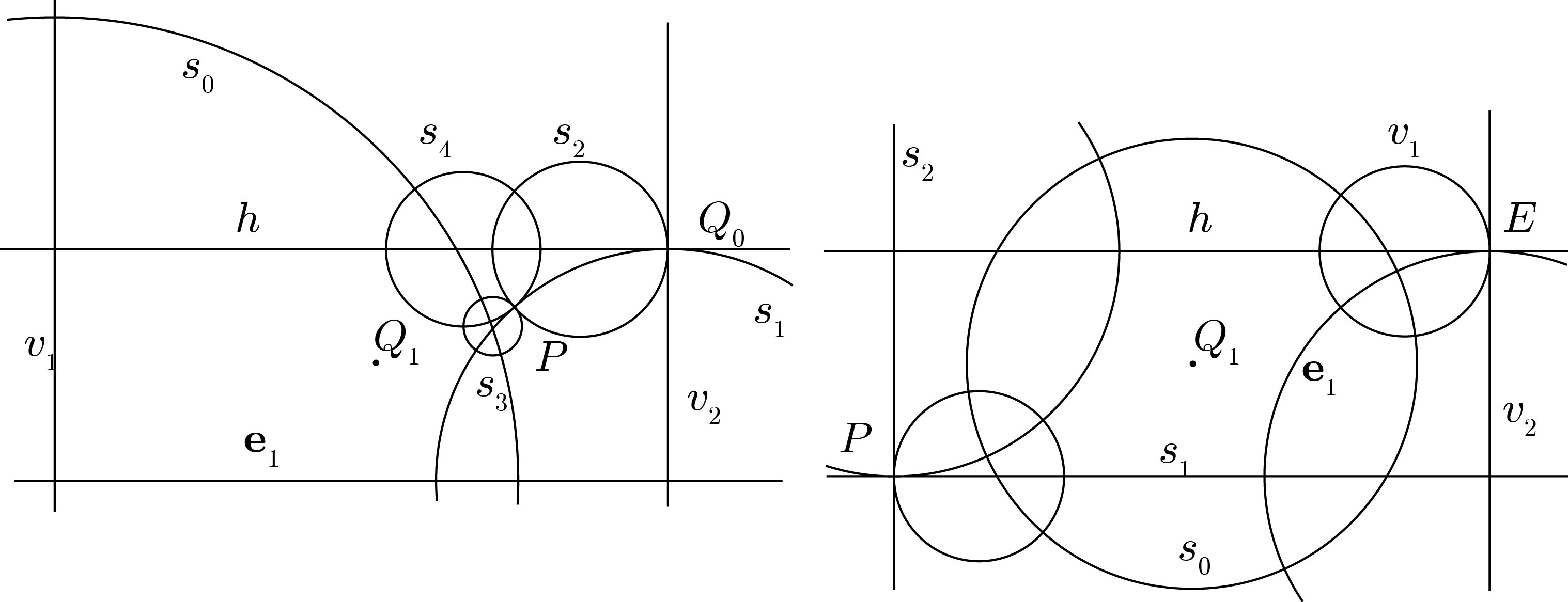}
\end{center}
\caption{\label{fig5} A fundamental domain for a subgroup of $\O_{\LL_7}^+$ and its inversion in  a circle centered at $Q_0$.  }
\end{figure}
Note that $s_4\cdot s_4=-2$, which suggests that the cusps at $P$ and $E$ might be symmetric.  We invert in the point $Q_0$ and get the second region in \fref{fig5}, which appears to have rotational symmetry about the center $Q_1=R_{s_0}(Q_0)$ of $s_0$ (in $\partial \H_{Q_0}$).  A rotation by $\pi$ about a line with endpoints $A$ and $B$ in $\partial \H$ has the equation
\[
\phi_{A,B}(\vc x)=\frac{2((A\cdot \vc x)B+(B\cdot \vc x)A)}{A\cdot B}-\vc x.
\]
The rotation $\phi_{Q_0,Q_1}$ is in $\O^+_{\LL_7}$, and
$\E_{-2}^*=\CC_7(\vc e_1)$ where
\[
\CC_7=\langle R_{h}, R_{v_1}, R_{v_2}, R_{s_0}, \phi_{Q_0,Q_1}\rangle 
\]  
\end{example*}

On $\partial \H_E$, the map $\phi_{A,B}$ is represented by the M\"obius transformation
\begin{equation}\label{mob}
\cc=\mymatrix{ \tilde A+\tilde  B & -2\tilde  A\tilde B \\ 2& -(\tilde  A+\tilde  B)},
\end{equation}
where $\tilde  A$ and $\tilde  B$ are the complex numbers that represent $A$ and $B$ in $\partial \H_E\sim \Bbb C$.    

Another useful formula gives the distance $x$ from a point $A$ to a line $H_{\vc n}$ (so $\vc n\cdot E=0$):
\[
x=\frac{\dd A\cdot \vc n}{\sqrt{-2\vc n\cdot \vc n}A\cdot E}.
\]
So, for example, if we think of $\vc e_1$ as the $x$-axis and $v_1$ as the $y$-axis, then the coordinates $(A_x,A_y)$ of a point $A\in \partial \H_E$ is
\[
(A_x,A_y)=\left( \frac{A\cdot v_1}{\sqrt n A\cdot E},\frac{2 A\cdot \vc e_1}{A\cdot E}\right).
\]
A plane with normal vector $\vc n$ is represented by a circle with radius given by Equation (\ref{eq3}) and center
\[
\left(\frac{\vc n\cdot v_1}{\sqrt n \vc n\cdot E}, \frac{2 \vc n\cdot \vc e_1}{\vc n\cdot E}\right).
\]

The point $Q_1$ generalizes for $n\equiv 3 \pmod 4$:
\begin{equation}\label{Q1}
Q_1=[ {-n^2+25},{-n^2+9},{2n+10},{2n-6}]
\end{equation}
(or an equivalent scalar multiple that is primitive).  Then $\phi_{Q_0,Q_1}\in \O_{\LL_n}^+$.  Geometrically, $Q_1$ is the center of the rectangle formed by $h$, $v_2$, $s_1$ and $s_2$ when $Q_0$ is the point at infinity.  

\begin{example*}[The case $n=10$]  The group $\O_{\LL_{10}}$ is generated by the usual reflections, the reflection $n_1=[-4,-6,1,1]$, and the map
\[
-R_{P}(\vc x)=2\frac{P\cdot{\vc x}}{P\cdot P}P-\vc x
\]
for $P=[6,6,-3,-1]$.  Note that $P\cdot P=16$, so $P$ is a point in the hyperbolic space.  This map is the $-1$ map on $\H$ centered at $P$.  As an action on $\partial \H_E$, it is the composition of inversion in a circle and rotation by $\pi$ about its center.  The center is $-R_P(E)$, and Equation (\ref{eq3}) gives $i$ times the circle's curvature\footnote{McMullen's code does not allow for a map like this.  This packing can be generated by a subgroup of index two that is generated by just reflections, though we also edited his code.} (see \fref{fig6}).
\begin{figure}
\begin{center}
\includegraphics{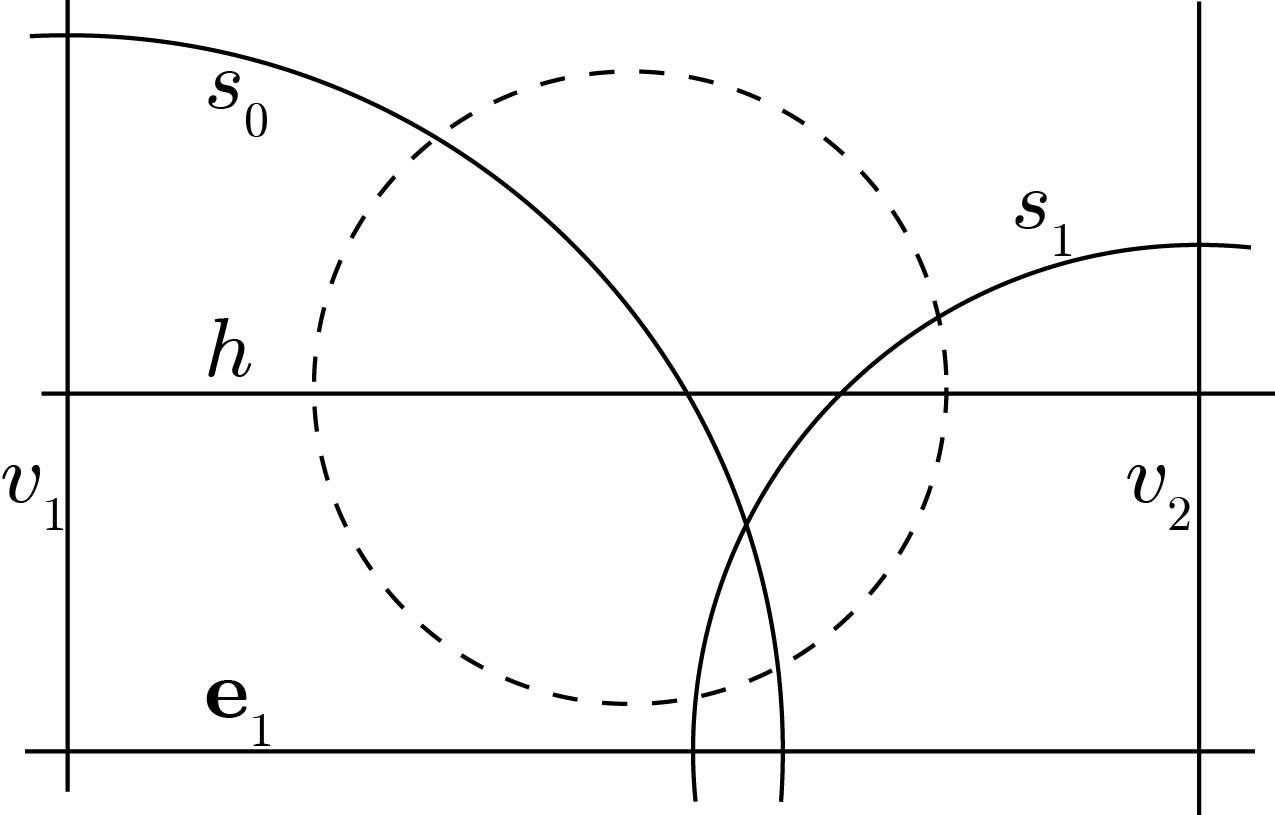}
\end{center}
\caption{\label{fig6} The fundamental domain for $\O_{\LL_{10}}$.  The dotted circle represents a map that is inversion in that circle composed with rotation by $\pi$ about its center.}
\end{figure}   
\end{example*}

\begin{example*}[The case $n=11$]  Every good game has its boss levels, and $11$ is one of them.  Chasing reflections is a never ending pursuit that leads one into the cusp $Q_1=[-6,-7,2,1]$ (see \fref{fig7} and also Equation (\ref{Q1})).   
\begin{figure}
\begin{center}
\includegraphics[width=\textwidth]{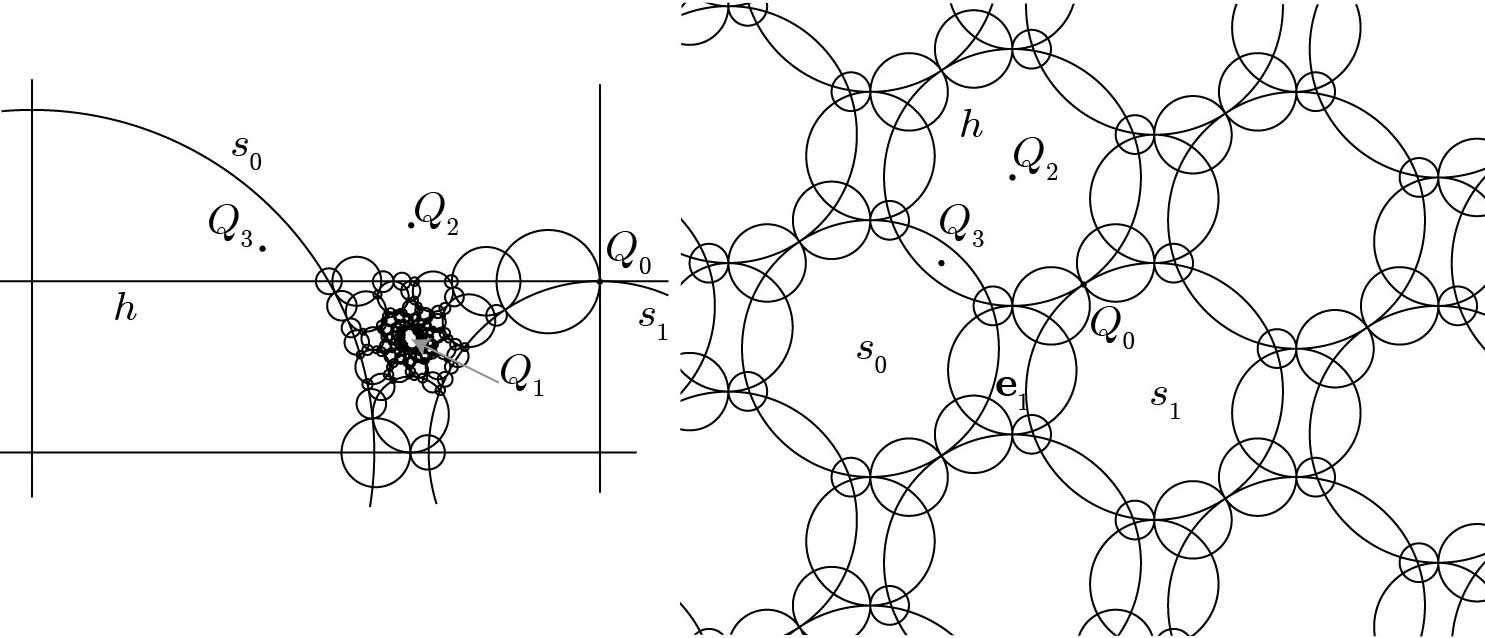}
\end{center}
\caption{\label{fig7}  A never ending set of reflections that generate the tiling for $n=11$, converging on the cusp $Q_1=[-6,-7,1,1]$ (left).  On the right, the same image inverted in $Q_1$, illuminating the rotational symmetry about $Q_0$, $Q_2$ and $Q_3$.}
\end{figure}
Inverting in this point reveals the rotational symmetry about $Q_0$; $Q_2=R_h(Q_1)=[-7,-6,2,1]$; and $Q_3=[-8,-7,4,1]$, which is a linear combination of $h+s_0$ and $Q_1$.  These three rotations, together with the usual reflections, generate $\O_{\LL_{11}}^+$.  
\end{example*}

\begin{example*}[The case $n=15$]  There is a point $P=[-6,-9,2,1]\in \H$ such that $-R_P\in \O_{\LL_{15}}^+$, but since $P\cdot \vc e_1=0$, it is not a symmetry of $\E_{-2}^*$.  We compose with $R_{\vc e_1}$ to get a rotation by $\pi$ whose endpoints are the irrational points $Q_2$ and $Q_3$ in Table \ref{tab1}.  Together with $\phi_{Q_0,Q_1}$ (see Equation (\ref{Q1})) and the usual reflections, these generate the packing.
\end{example*}

\begin{example*}[The case $n=21$]  This is the first group that includes a glide translation:
\[
T=\mymatrix{-24 & -68 & -69 & -1182 \\ -24 & -69& -68 & -1202 \\ 3 & 8 & 8& 142 \\ 2&6&6&103}.
\]
This was found by finding a cusp similar to $Q_0$ and guessing that there should be a symmetry that sends one to the other.  The group is generated by the usual reflections including $R_{s_1}$ and $R_{s_2}$, the map $T$, and the reflection $R_{s_3}$ where $s_3=[-12,-15,2,1]$.  The eigenvalues of $T$ are $1$, $-1$, $\ll=9+4\sqrt 5$, and $\ll^{-1}$.  
\begin{figure}
\includegraphics[width=162pt]{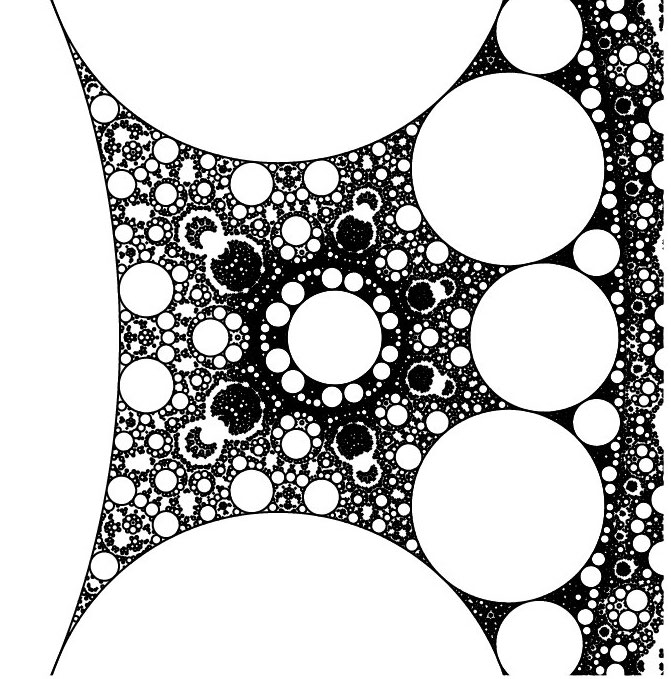}
\hspace{10pt} 
\includegraphics[width=162pt]{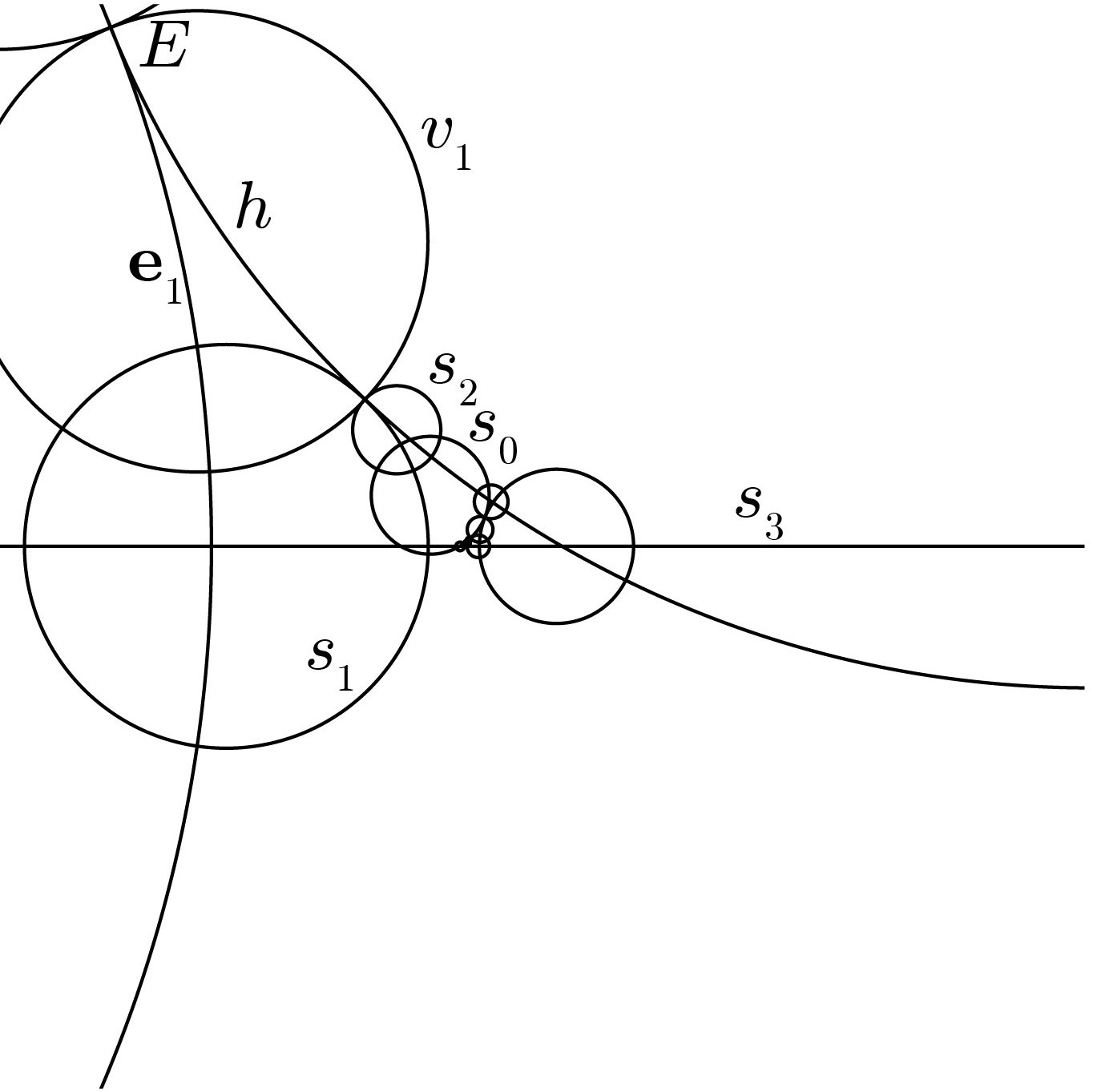}
\caption{\label{fig21}  The tiling with $n=21$ and the eigenvector $A$ at infinity. The map $T$ reflects the central circle along a vertical axis and dilates it by a factor of $\ll$, giving the large circle $\vc e_1$ bounding the picture on the left. The figure on the right represents various reflective symmetries of the packing.}
\end{figure}
Let $A$ be the eigenvector associated to $\ll$.  A perspective with $A$ the point at infinity is shown in \fref{fig21}.   
\end{example*}

A hyperbolic translation has an eigenvalue $\ll>1$, and the rest are $\ll^{-1}$ and $1$ with multiplicity $2$.  Let $A$ and $B$ be the eigenvectors associated to $\ll$ and $\ll^{-1}$, respectively.  Then the corresponding map on $\Bbb C$ is
\[
\tau_{\ll,A,B}(z) =\frac{(\ll \tilde A-\tilde B)z+  (1-\ll) \tilde A\tilde B}{ (\ll-1)z + \tilde A-\ll \tilde B}.
\]
For the above case ($n=21$), rather than come up with a representation for the glide reflection $T$, let us note that the eigenvector associated to $1$ is $s_3$, so the composition $S=R_{s_3}\circ T$ is an orientation preserving map that is in our group.  It is the composition of a hyperbolic translation with rotation by $\pi$ about its line of translation, and is therefore represented by $\ss(z)=-\tau_{-\ll, A,B}(z)$.

\ignore{
Chasing reflections in this case is a never ending pursuit that leads one to the cusp $Q_1=[-6,-7,2,1]$ (see \fref{fig7}).  
\begin{figure}
\begin{center}
\includegraphics[width=\textwidth]{Root11f.eps}
\end{center}
\caption{\label{fig7}  A never ending set of reflections that generate the tiling for $n=11$, converging on the cusp $Q_1=[-6,-7,1,1]$ (left).  On the right, the same image inverted in $Q_1$, illuminating the rotational symmetry about $Q_0$, $Q_2$ and $Q_3$.}
\end{figure}
Moving $Q_1$ to infinity leads us to the rotations $\phi_{Q_1,Q_0}$, $\phi_{Q_1,Q_2}$ and $\phi_{Q_1,Q_3}$ for $Q_2=R_h(Q_1)=[-7,-6,2,1]$, and $Q_3$ a linear combination of $h+s_0$ and $Q_1$: $Q_3=[-8,-7,4,1]$.  These three rotations, together with the usual reflections, generate $\O_{\LL_{11}}^+$.  
}
\ignore{
(For the interested reader, note that there is only one cusp when $n=1$ or $2$, and that the two cusps for $n=3$ are not congruent, since the norms of their faces are different.  Thus, if there is another symmetry, then it is a Euclidean symmetry of the rectangular base.  These are easily eliminated.)  

Once we find generators for $\O^+_{\LL_n}$ and its corresponding fundamental domain $\F_n$, we look at the subgroup $\CC_n$ generated by the elements

******

For $n\geq 4$, we will have to find more generators.  In the following, we look at certain interesting cases.  Results for all $n\leq 26$ are given in the Appendix.

***

\begin{remark}  The distance $d$ for the packing in \fref{fig2} is $1+\sqrt 3$, so it is not a member of this infinite class. 
\end{remark} 
}

\section{Gluing and slicing}\label{s3}  
\subsection{Let us play a new game.}  Find a maximal circle packing with the property that every circle in the packing is a member of a cluster of four mutually tangent circles.  Of course, the Apollonian circle packing satisfies this property, so the challenge is to be different.  

Let us begin, for example, with the fundamental domain for $n=7$ shown in \fref{fig6}, and let us move $v_1$ to the left one unit (let us call that new line $v_1'$).  This gives us a new group $\CC_7'=\langle R_{v_1'},R_h,R_{v_2}, R_{s_0},\phi_{Q_0,Q_1}\rangle$ and $\CC_7'(\vc e_1)$ is the packing shown in \fref{fig7A}.
\begin{figure}
\includegraphics[width=\textwidth]{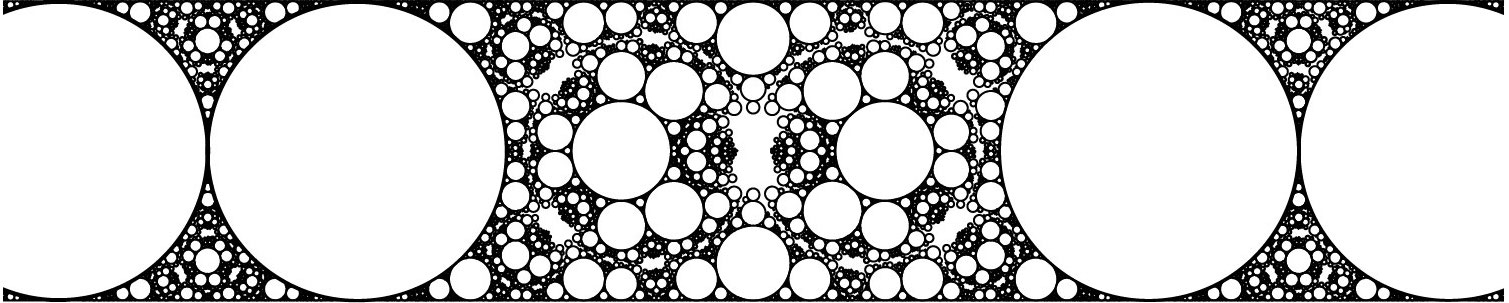}
\caption{\label{fig7A}  A blend of the $n=7$ packing and the Apollonian packing}
\end{figure}
This packing clearly has a cluster of four mutually tangent circles, and since $\CC_7'$ acts transitively on the packing (it is the orbit of a single element), every circle is a member of a cluster of four mutually tangent circles.  We say that the packing has the {\em Apollonian property}.  

We should think of this as gluing two compatible fundamental domains together.  (A similar process in described in \cite{CCS19}.)  To the left of the fundamental domain for $n=7$, we glued along the plane $H_{v_1}$ a reflected version of our fundamental domain for the Apollonian packing.  Since the faces that intersect $H_{v_1}$ are compatible, the new fundamental domain generates a circle packing.  We can  do this for any $n$, giving us the following result:

\begin{theorem}  There exists an infinite number of maximal circle packings with the Apollonian property.
\end{theorem}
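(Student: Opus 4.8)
The plan is, for each positive integer $n$, to glue a reflected copy of the Apollonian fundamental cell onto the fundamental domain attached to $\LL_n$ in Theorem~\ref{t1}, producing a maximal packing with the Apollonian property, and then to extract infinitely many pairwise inequivalent examples.

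First, set up the cell. Since $\O_{\LL_n}^+$ is arithmetic it has a convex fundamental polyhedron $\F_n$ of finite volume, which we may take to contain $E=\vc e_1+\vc e_2$ and to have $H_{v_1}$, $v_1=[n,n,1,-1]$, among its faces; removing $R_{\vc e_1}$ gives a group $\CC_n$ with $\CC_n(\vc e_1)=\E^*_{-2}$ the packing of Theorem~\ref{t1} and fundamental polyhedron $\D_n=\F_n\cup R_{\vc e_1}\F_n$. A direct computation in $J_n$ gives $v_1\cdot\vc e_1=v_1\cdot h=v_1\cdot s_0=0$ and $|v_1\cdot v_2|=\|v_1\|\,\|v_2\|$, independently of $n$: in the strip perspective at $E$ the face $H_{v_1}$ is a straight line, met perpendicularly by $H_{\vc e_1}$, $H_h$ and $H_{s_0}$, parallel to $H_{v_2}$, and incident to no other face of $\D_n$. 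For $n=1$ the same construction gives the Apollonian group $\CC_1$ with cell $\D_1$, whose analogous vertical face has the same incidences and whose width in the $v_1$-direction equals one.

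Now do the gluing. In the strip perspective let $v_1'$ be the line one unit to the left of $v_1$, and set
\[
\D_n'=\D_n\cup R_{v_1}(\D_1),\qquad
\CC_n'=\big\langle R_{v_1'},\ \text{the generators of }\CC_n\text{ other than }R_{v_1}\big\rangle ,
\]
so that $R_{v_1}(\D_1)$ is the reflected Apollonian cell filling the slab between $H_{v_1'}$ and $H_{v_1}$. Every face incident to $H_{v_1}$ is perpendicular to it on both sides (on the $\D_n$ side these are $H_{\vc e_1},H_h,H_{s_0}$; on the $\D_1$ side they are the same planes), so these faces extend one another across $H_{v_1}$, and $\D_n'$ is again a convex polyhedron whose dihedral angles are submultiples of $\pi$ and whose ridge cycles close --- the only new ridges lie on $H_{v_1}$, where two right angles form a straight angle. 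By Poincar\'e's polyhedron theorem $\CC_n'$ is discrete with fundamental domain $\D_n'$, which has finite volume since $\D_n$ and $\D_1$ do. The faces of $\D_n'$ that are circles of the packing are pairwise tangent or disjoint: among faces from $\D_n$ this is Theorem~\ref{t1}, among faces from $R_{v_1}(\D_1)$ it is the case $n=1$, and a face of one kind lies in $H_{v_1}^+$ and a face of the other in $H_{v_1}^-$, so they cannot overlap. Hence $\P_n:=\CC_n'(\vc e_1)$ is a circle packing. It has the Apollonian property: $R_h,R_{s_0},R_{v_2}\in\CC_n'$ place $\vc e_2,\vc e_3,\vc e_4$ in $\P_n$, and $R_{v_1}(\D_1)$ contributes to $\P_n$ a quadruple of mutually tangent circles (the Apollonian seed quadruple, transported by $R_{v_1}$); since $\CC_n'$ acts transitively on $\P_n$, the translates of this quadruple put every circle of $\P_n$ in a cluster of four mutually tangent circles. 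Maximality follows exactly as in Section~1: $\CC_n'$ is a lattice, so its orbit of $\vc e_1$ is dense on $\partial\H$, and if a vector $\vc m$ with $\vc m\cdot\vc m=-2$, $\vc m\cdot D>0$ could be added, then some $\vc n\in\P_n$ would satisfy $H_{\vc n}^-\subset H_{\vc m}^-\subset\K:=\bigcap_{\vc n'\in\P_n}H_{\vc n'}^+$, contradicting $\K\subset H_{\vc n}^+$.

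It remains to produce infinitely many inequivalent such packings, and this is the step requiring real care --- one must rule out an inversive equivalence that trades the blended region of one packing for the bulk of another. The cleanest route I see is to fix $n\ge 2$ and glue $k$ consecutive Apollonian cells into the slab of width $k$ to the left of $H_{v_1}$; by the same reasoning this yields, for each $k$, a maximal Apollonian-property packing $\P_{n,k}$ whose symmetry group is a lattice containing the corresponding reflection--rotation group $\CC_{n,k}$ (with $\D_n$ together with $k$ Apollonian cells as fundamental domain), hence of covolume $\big(\mathrm{vol}(\D_n)+k\,\mathrm{vol}(\D_1)\big)/\mu_k$ for an index $\mu_k=[\mathrm{Sym}(\P_{n,k}):\CC_{n,k}]$ which stays bounded (a symmetry of $\P_{n,k}$ is pinned down by its effect on the central block of $k$ cells, for which there are only boundedly many possibilities). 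Since the covolume of the symmetry group is an invariant under inversive equivalence and these covolumes tend to infinity, infinitely many of the $\P_{n,k}$ are pairwise inequivalent, which proves the theorem. (Alternatively, the $\P_n$ for varying $n$ should already be pairwise inequivalent, since each retains the $n$-specific local structure of $\E^*_{-2}(n)$ on the chunk $\D_n$, distinguished for instance by the inversive invariant $\det J_n=-256n$ of the seed quadruple $\vc e_1,\dots,\vc e_4\subset\P_n$; but making this precise is less immediate than the covolume count.)
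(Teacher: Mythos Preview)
There is a genuine gap at the foundation of your argument: the set $\D_n=\F_n\cup R_{\vc e_1}\F_n$ is \emph{not} a fundamental polyhedron for $\CC_n$.  Removing one generator from a Coxeter presentation does not double the fundamental domain; the fundamental domain of $\CC_n=\langle R_h,R_{v_1},R_{v_2},R_{s_0},\ldots\rangle$ is the region bounded by the \emph{remaining} walls, and that region has infinite volume.  For instance when $n=1$ the wall $H_h$ (the line $y=1$) is parallel to $H_{\vc e_1}$ (the line $y=2$), not perpendicular to it, so dropping $H_{\vc e_1}$ leaves a chamber unbounded in the $y$-direction; indeed $\CC_n$ is a thin group whose domain of discontinuity contains the open disks bounded by the packing circles.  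Consequently $\CC_n'$ is not a lattice either, and both of the arguments you build on that claim collapse: the orbit $\CC_n'(\vc e_1)$ is not dense on $\partial\H$ (it accumulates only on the residual fractal), so your maximality argument fails; and the covolume of the symmetry group of $\P_{n,k}$ is infinite for every $k$, so your covolume-based inequivalence argument fails as well.

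There is a second, independent gap: you assert that $\CC_n(\vc e_1)=\E^*_{-2}$, i.e.\ that $\CC_n$ acts transitively on the packing of Theorem~\ref{t1}.  The paper explicitly says this is \emph{not known} for general $n$, and addressing it is the entire content of the paper's proof.  The paper's fix is different from anything you do: if $\F_n$ has $-2$ faces besides $\vc e_1$, one enlarges $\CC_n$ by the reflections through those faces, obtaining a (possibly different) maximal packing on which the enlarged group acts transitively by construction; the gluing is then performed on that modified packing.  Your Poincar\'e-polyhedron and compatibility checks along $H_{v_1}$ are in the right spirit, but they sit on top of these two errors and do not rescue the conclusion.
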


\begin{proof}  There is one minor detail we should address:  For a given $n$, we do not know whether $\CC_n$ acts transitively on $\E_{-2}^*$, nor whether this is the case for infinitely many $n$.  However, suppose the fundamental domain $\F_n$ has a face other than $\vc e_1$ whose norm is also $-2$.  Then the reflection through that face is in $\O_{\LL_n}^+$ and hence by adding that reflection to $\CC_n$, we can get a different subset of $\E_{-2}$ that is a maximal circle packing.  By doing this for all faces of $\F_n$ except for $\vc e_1$, we get a group that acts transitively on the new packing.  Thus, when we replace $R_{v_1}$ with $R_{v_1'}$ for this modified packing, we get a packing with the Apollonian property.  
\end{proof}

We can glue on the face $v_2$ as well, when the two fundamental domains are compatible.  For example, in \fref{fig57}, we have glued the fundamental domains for $n=5$ and $n=7$ together in two different ways, giving us two different packings.  
\begin{figure}
\begin{center}
\includegraphics[width=301pt]{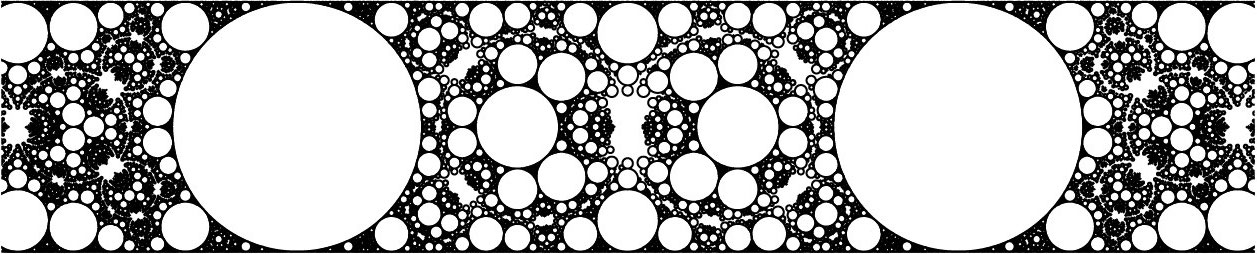}
\includegraphics[width=301pt]{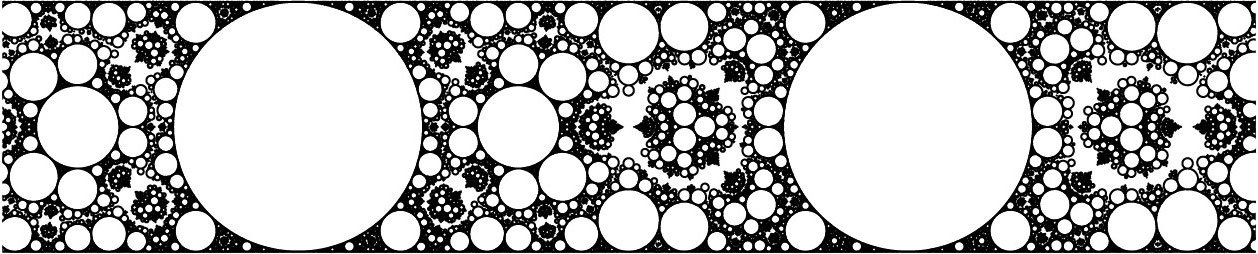}
\end{center}
\caption{\label{fig57}  A blend of the $n=5$ and $n=7$ packings, glued on the face $v_1$, and on the face $v_2$.}
\end{figure}

\begin{remark}  Gluing is a geometric process, so it is no surprise that the integral curvature property is lost when two fundamental domains are glued together.  But not always, so let us suggest a new rule/game:  Find an infinite set of maximal circle packings that have both the Apollonian property and the integral curvature property.
\end{remark}   
 
\subsection{Closing the gap?}  Is there a strategy to create infinitely many packings where the gap $d$ is between $1$ and $\sqrt 2$?  For example, if we let $n=3/2$, then $J_n$ still has integer entries so we can investigate $\E_{-2}^*$.  
\begin{figure}
\begin{center}
\includegraphics[width=108pt]{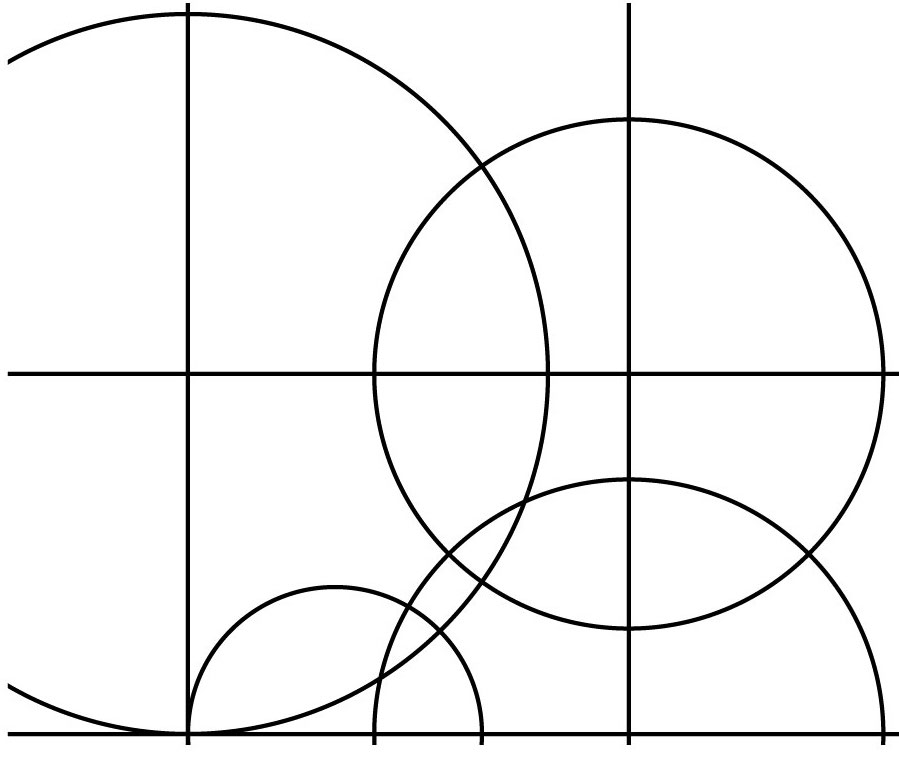} \ \ 
\includegraphics[width=108pt]{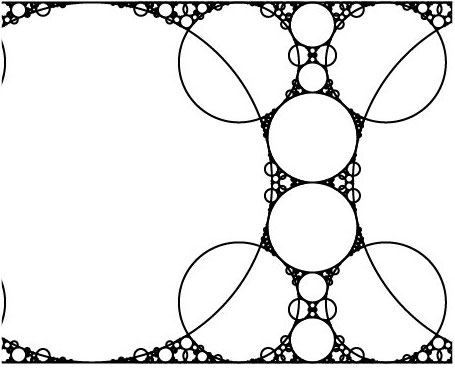} \ \ 
\includegraphics[width=108pt]{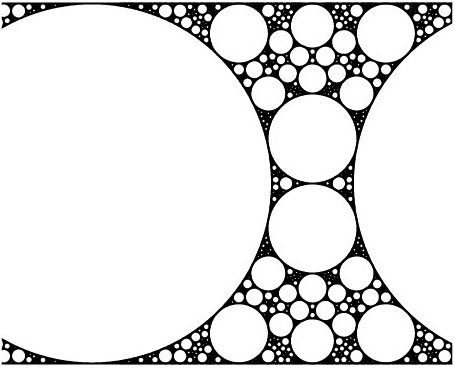} 
\end{center}
\caption{\label{fig32} The fundamental domain for $\O_{\LL_{3/2}}^+$, the set $\E_{-2}^*$, and a packing that is a subset of $\E_{-2}$.}
\end{figure}
A fundamental domain for $\O_{\LL_{3/2}}^+$ is shown in \fref{fig32}, and because it includes a face that is not perpendicular to $H_{\vc e_3}$, the set $\E_{-2}^*$ is not a packing.  However, if we reflect our fundamental domain across that face and glue the two domains together, we get a subgroup of index two in the symmetries of $\E_{-2}^*$ that generates a packing.  

We have not played this game long enough to know if there is a strategy that gives us packings where the gap $d$ converges to $1$.  

\subsection{Filling in ghost circles.}  When a packing has a ghost circle (e.g. $n=7$ in \fref{fig1to9}), we can fill it in or reflect across it (see \fref{fig7a}).  This corresponds to slicing off a portion of the fundamental domain and including the resulting new face as a member of the new packing, or reflecting across it.  In \fref{fig5} (right), this corresponds to cutting the region in half with a line through $Q_1$ perpendicular to $h$.      
\begin{figure}
\begin{center}
\includegraphics[width=301pt]{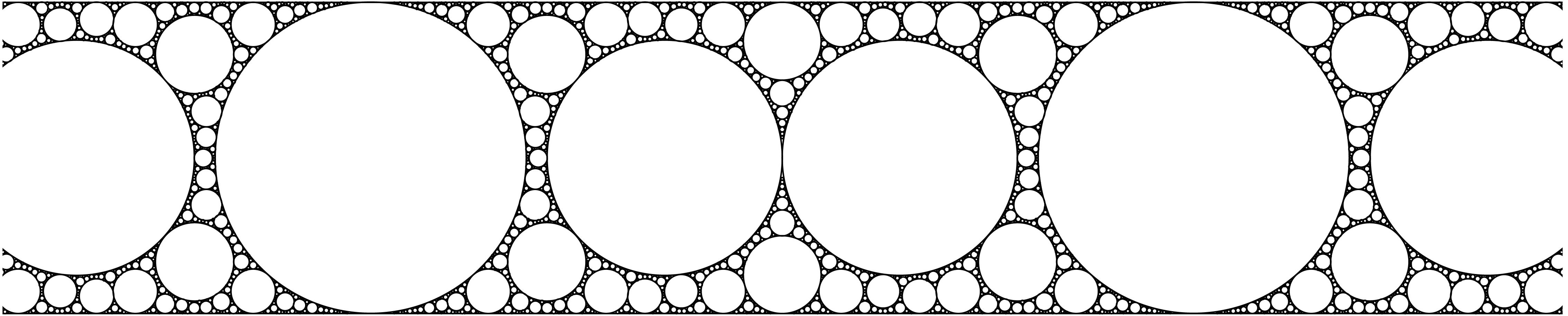}
\includegraphics[width=301pt]{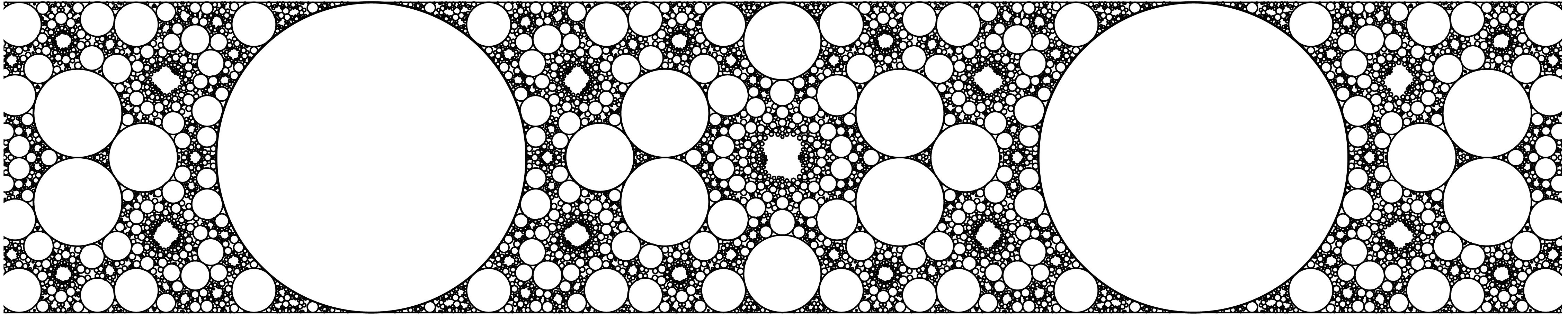}
\end{center}
\caption{\label{fig7a} Two variations on the $n=7$ packing.}
\end{figure}
\ignore{
The symmetries of $\O_{\LL_{3/2}}^+$ include the usual $R_{\vc e_1}$, $R_h$, $R_{v_1}$ and $R_{v_2}$ (but not $R_{s_0}$), and also $R_{\vc e_3}$, and $R_{s_i}$ for $i=1, 2, 3$ where 
\begin{align*}
s_1&=[] \\
s_2&=[]
s_3&=[].
\end{align*}  
Then $ 
}

\section{Appendix}

\begin{figure}
\includegraphics[width=301pt]{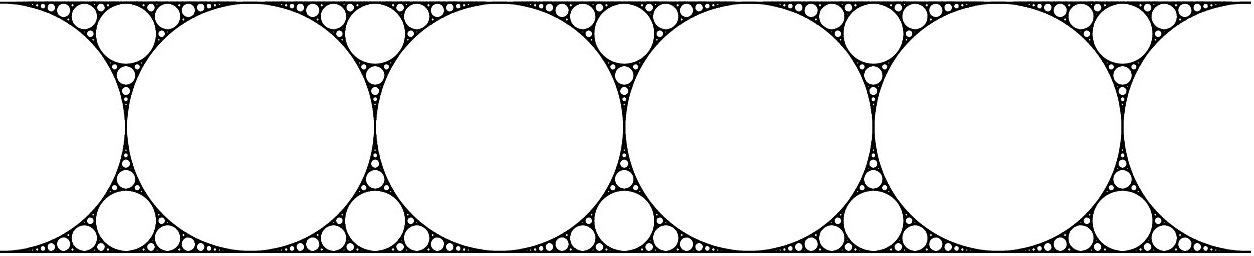}
\includegraphics[width=301pt]{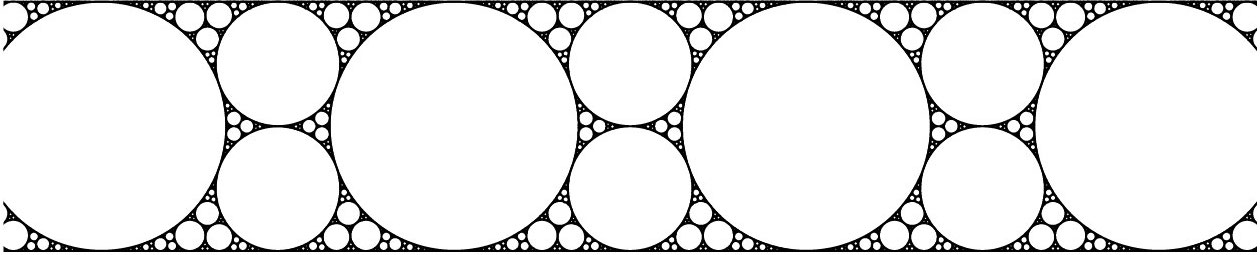}
\includegraphics[width=301pt]{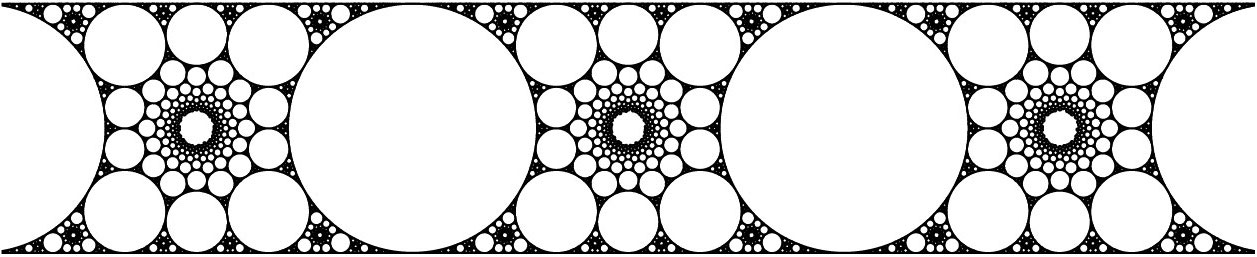}
\includegraphics[width=301pt]{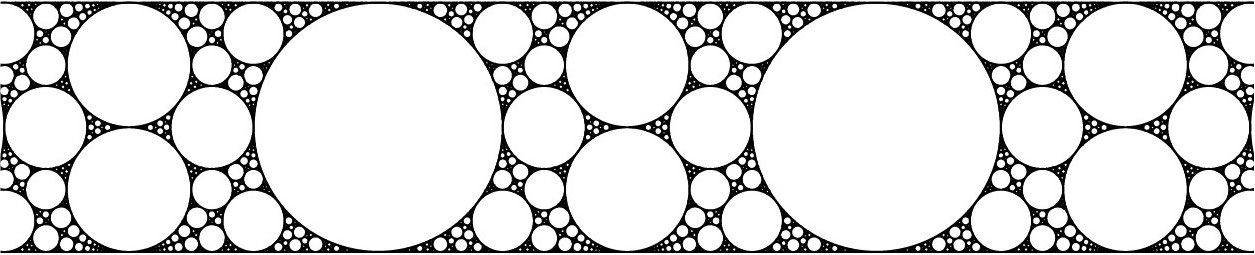}
\includegraphics[width=301pt]{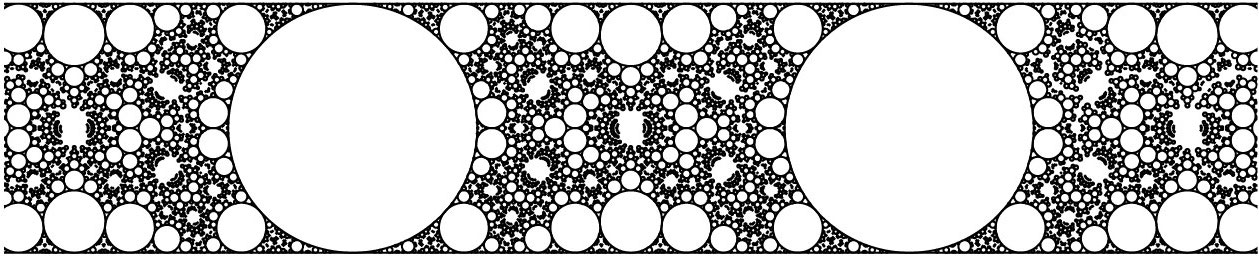}
\includegraphics[width=301pt]{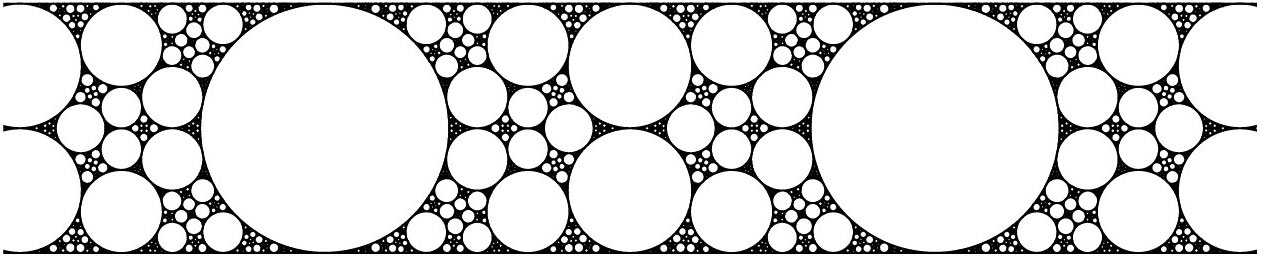}
\includegraphics[width=301pt]{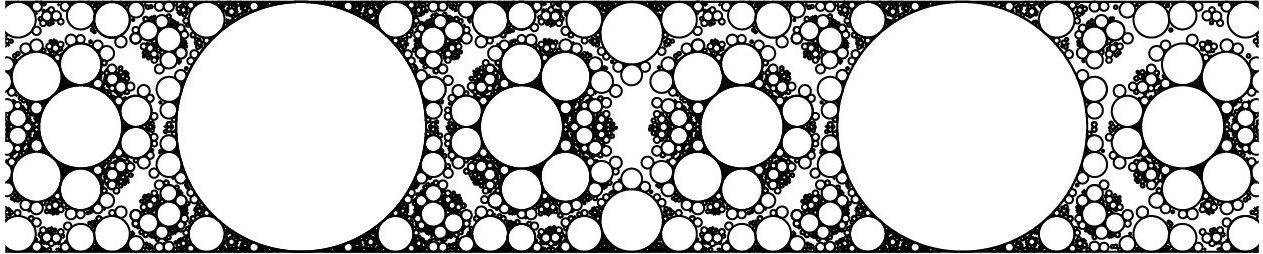}
\includegraphics[width=301pt]{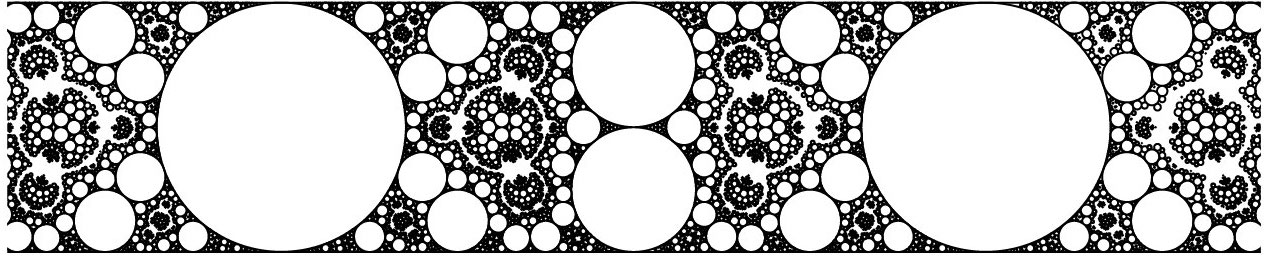}
\includegraphics[width=301pt]{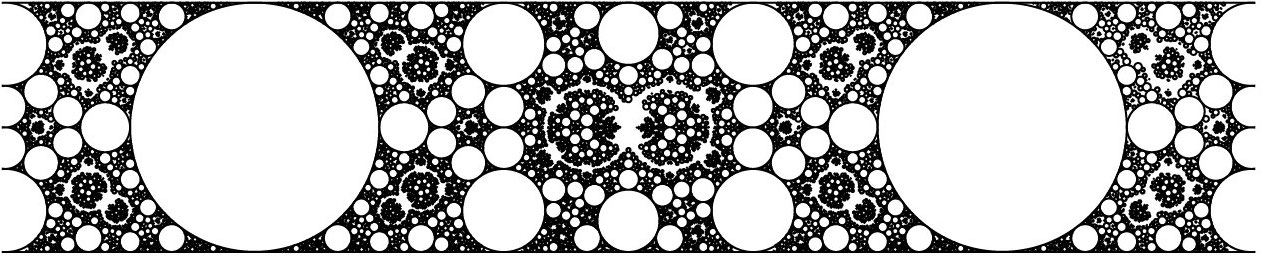}
\caption{\label{fig1to9} The packings for $n=1$ through $9$.  }
\end{figure}

\begin{figure}
\includegraphics[width=301pt]{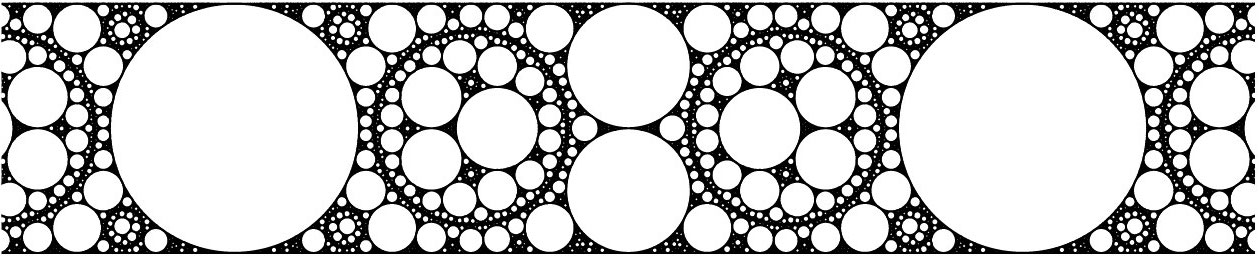}
\includegraphics[width=301pt]{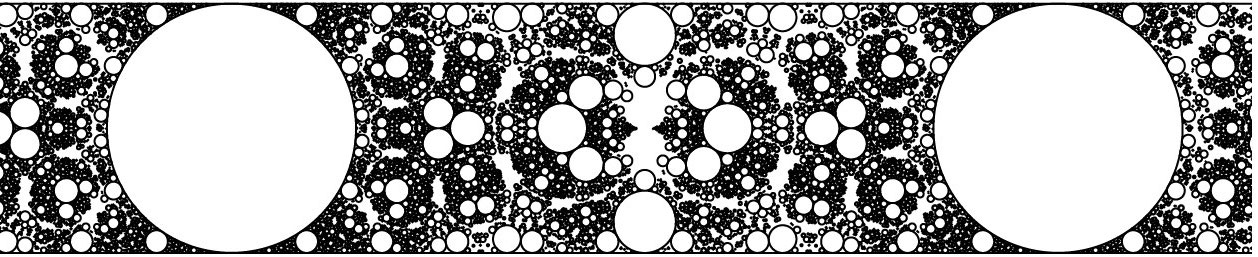}
\includegraphics[width=301pt]{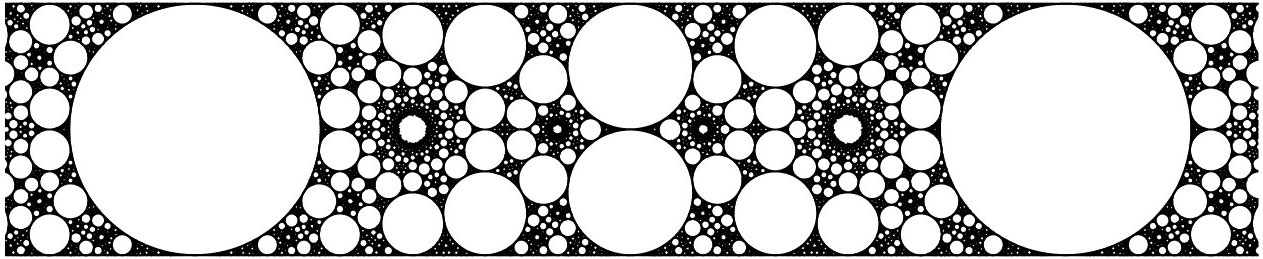}
\includegraphics[width=301pt]{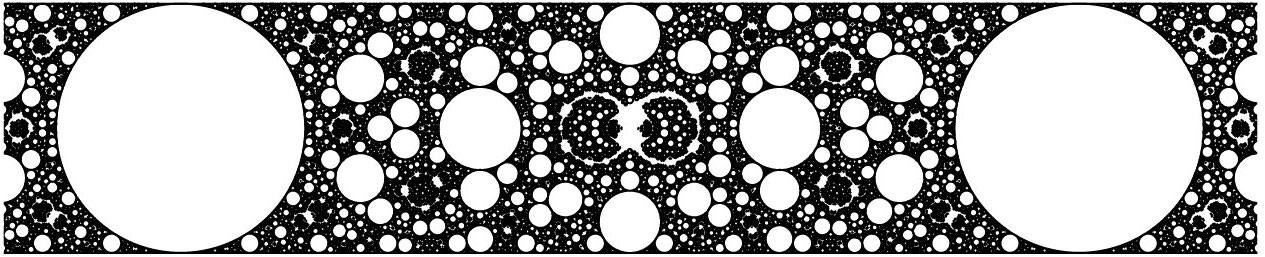}
\includegraphics[width=301pt]{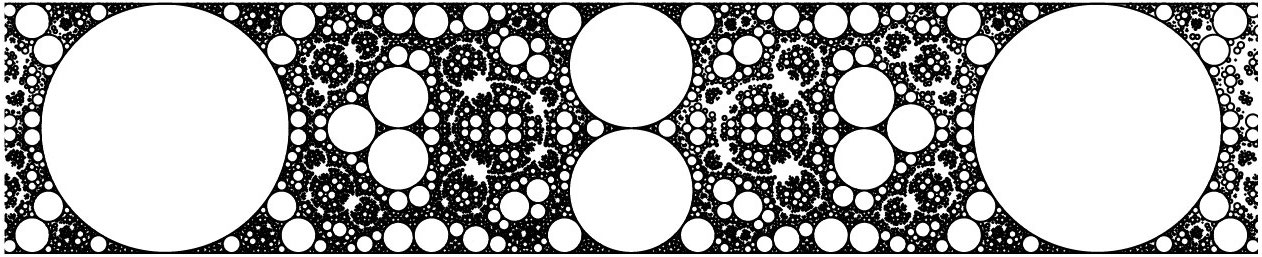}
\includegraphics[width=301pt]{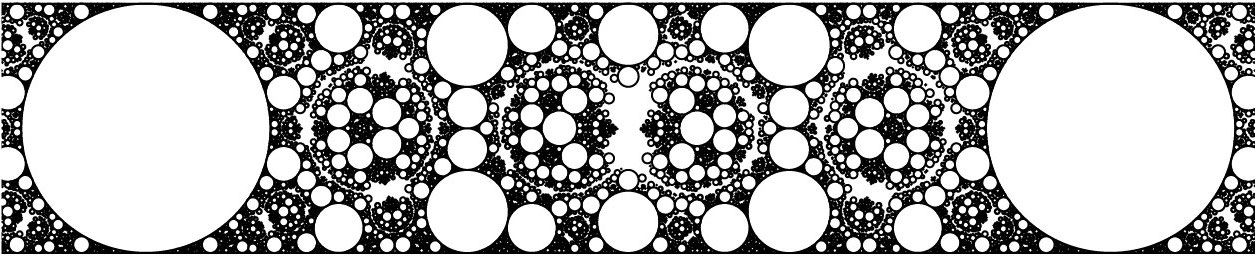}
\includegraphics[width=301pt]{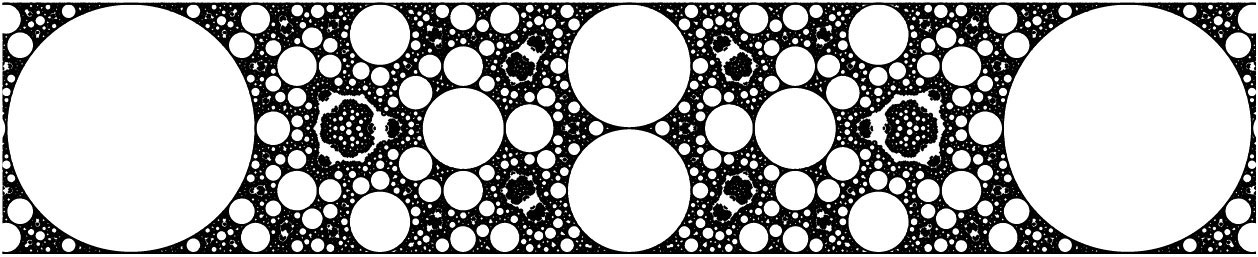}
\includegraphics[width=301pt]{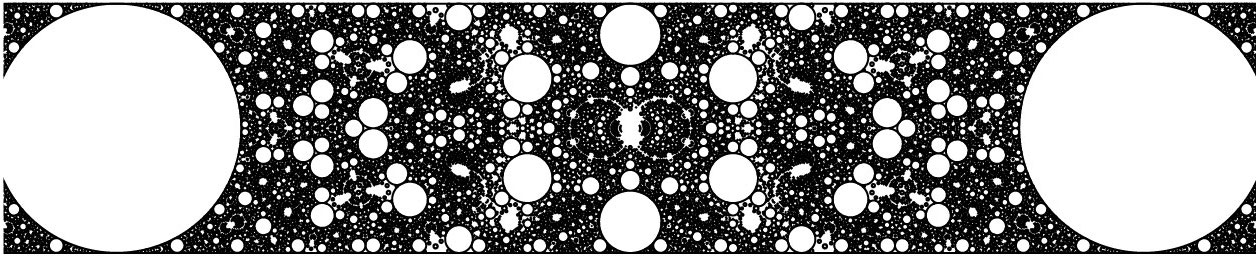}
\includegraphics[width=301pt]{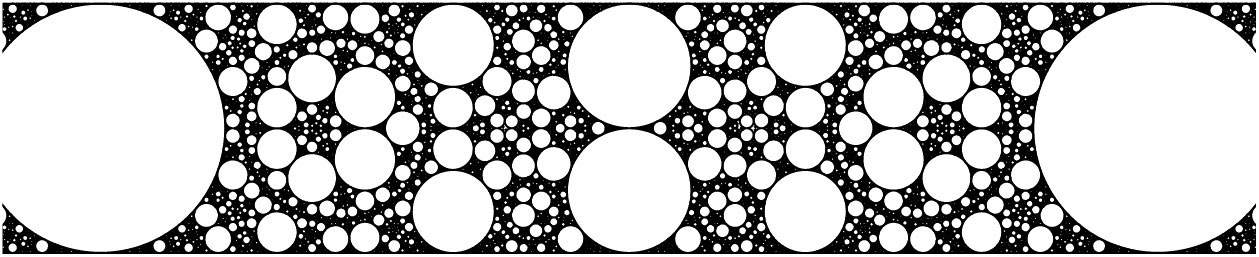}
\caption{\label{fig10to18}  The packings for $n=10$ through $18$.  }
\end{figure}

\begin{figure}
\includegraphics[width=301pt]{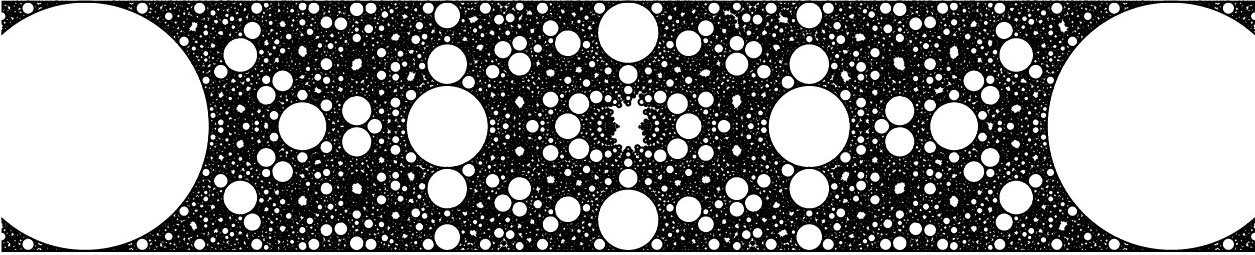}
\includegraphics[width=301pt]{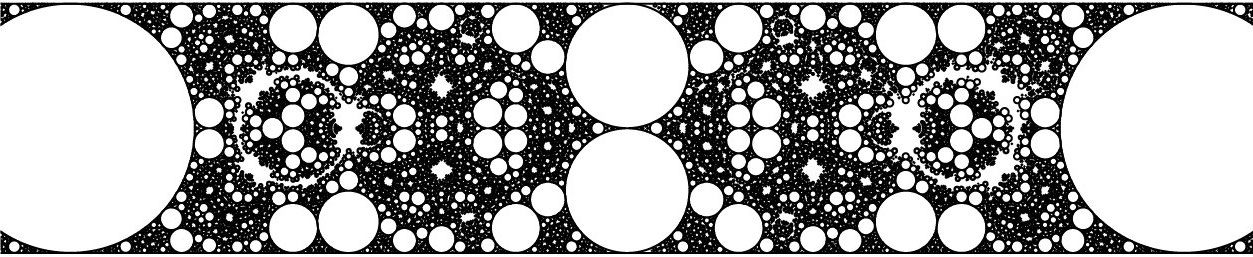}
\includegraphics[width=301pt]{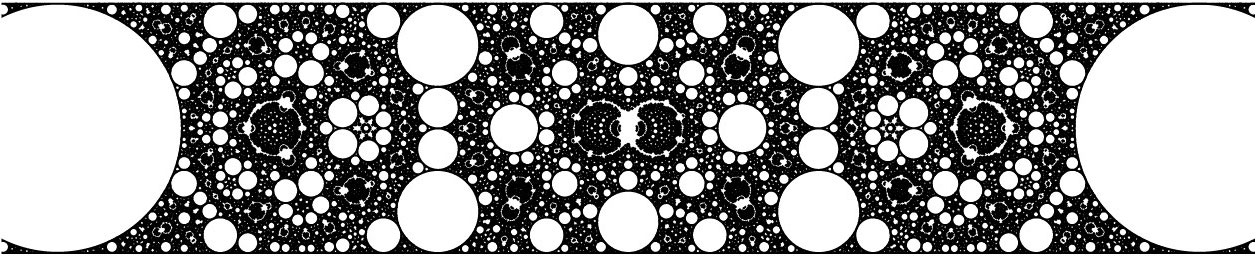}
\includegraphics[width=301pt]{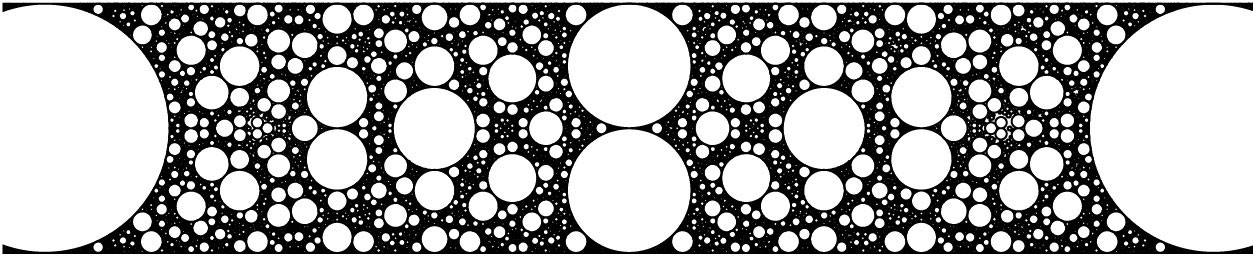}
\includegraphics[width=301pt]{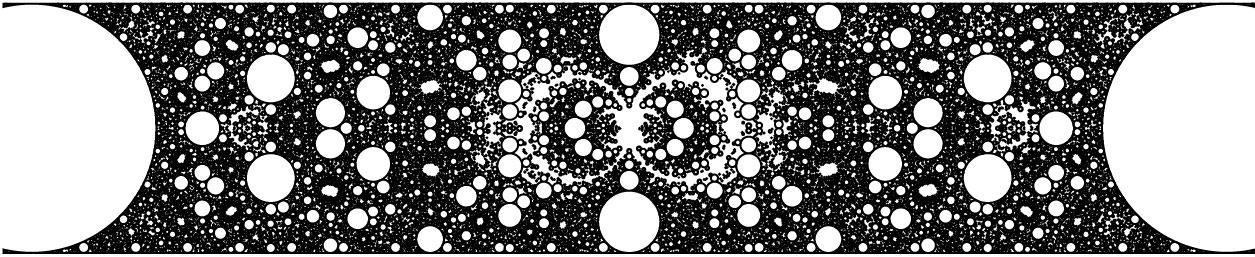}
\includegraphics[width=301pt]{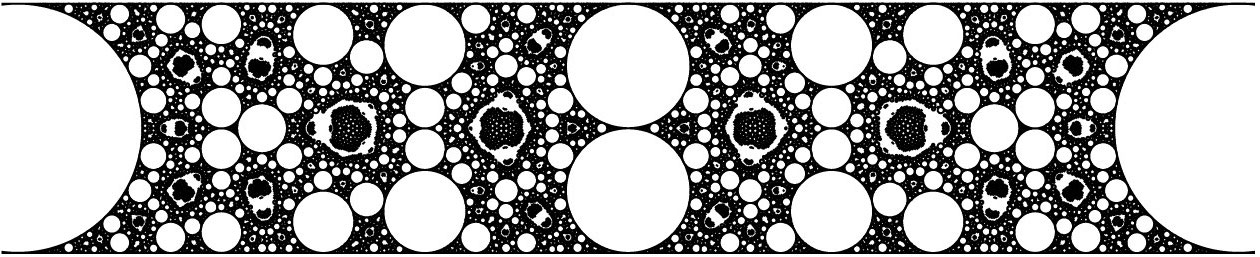}
\includegraphics[width=301pt]{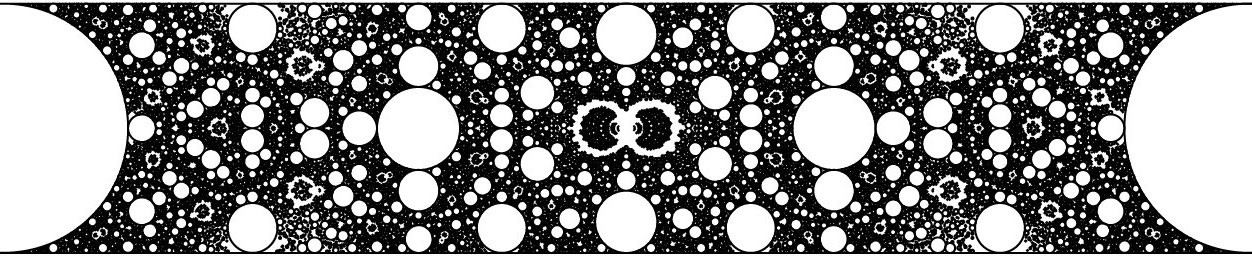}
\includegraphics[width=301pt]{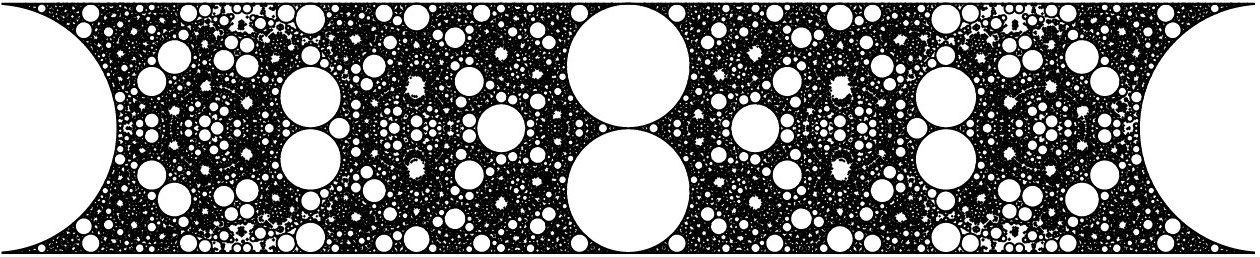}
\caption{\label{fig19to26}  The packings for $n=19$ through $26$. } 
\end{figure}

In Figures \ref{fig1to9}, \ref{fig10to18} and \ref{fig19to26}, we present strip versions of the circle packings for $n=1$ through $26$.  

In Tables \ref{tab1}, \ref{tab2} and \ref{tab3} we give the generators for $\CC_n$ for $n\leq 26$.  For all $n$, the set of generators for $\CC_n$ include the first four generators in Table \ref{tab1}, namely $R_h$, $R_{v_1}$, $R_{v_2}$, and $R_{s_0}$.  The packing is $\CC_n(\vc e_1)$.  Though $\CC_n$ acts transitively on the packing for $n\leq 26$, we have no reason to believe that it does so in general.    

In Tables \ref{tab1}, \ref{tab2} and \ref{tab3}, the types are:
\begin{enumerate}
\item Reflections $R_{\vc n}$ where $\vc n\cdot \vc n<0$, which in $\Bbb C$ correspond to inversion in a circle centered at the given point and with the given radius $r$.
\item  Rotations $\phi_{A,B}$ by $\pi$ about a line in $\Bbb H^3$ with endpoints $A$ and $B$, where $A\cdot A=B\cdot B=0$; see Equation (\ref{mob}) for the corresponding M\"obius map.
\item Inversion $-R_P$ through a point $P$ in $\Bbb H^3$ (so $P\cdot P>0$), which in $\Bbb C$ is inversion in the given circle composed with rotation by $\pi$ about its center.
\end{enumerate}  
The coordinates in $\Bbb C$ correspond to the choice of $\vc e_1$ as the real axis, $v_1$ as the imaginary axis, and $P_3$ 
%(the center of the circle $H_{\vc e_3}$) 
as the point $i$.

\ignore{
The types of generators are: 
\begin{enumerate}  
\item The reflections
\[
R_{\vc n}(\vc x)=\vc x-\frac{\vc x\ldot \vc n}{\vc n\ldot \vc n}\vc n,
\]
which in $\Bbb C\equiv \partial \H_E$ is represented by inversion in a circle with given center and radius $r$ (except for $h$, $v_1$, and $v_2$, which are lines).  
\item The Bertini involutions 
\[
\phi_{P,Q}(\vc x)= \frac{2((P\cdot \vc x)Q+(Q\cdot \vc x)P)}{P\cdot Q}-\vc x,
\]
which in $\Bbb C$ is the fractional linear transformation
\[
\cc=\mymatrix{ P+Q & -2PQ \\ 2&-(P+Q)}.
\]
Here we are using $P$ and $Q$ to represent both points in $\Bbb R^{3,1}$, and in $\Bbb C$. 
\end{enumerate}
}

\begin{table}
\begin{tabular}{|c|c|c|c|c|} \hline
 $n$ & Type & in $\LL_n$ & in $\Bbb C$ \\
\hline
All $n$& $R_h$ & $h=[-1,1,0,0]$ & $y=1$  \\
 & $R_{v_1}$ & $v_1=[n,n,1,-1]$ & $x=0$  \\
 & $R_{v_2}$ & $v_2=[0,0,-1,1]$ & $x=\sqrt n$  \\
 & $R_{s_0}$ & $s_0=[0,-1,1,0]$ & $0$,  $r=2$ \\
 4 & $R_{s_1}$ & $s_1=[-2,-2,1,1]$ & $2+i$, $r=1$ \\
 5 & $R_{s_1}$ & $s_1=[-1,-3,1,1]$ & $\sqrt 5$, $r=1$ \\
 & $R_{s_2}$ & $s_2=[-5,-5,3,2]$ & $\frac{4}{\sqrt 5}+i $, $r=\frac{1}{\sqrt 5}$ \\
 6 & $R_{s_1}$ & $s_1=[-2,-4,1,1]$ & $\sqrt 6$, $r=\sqrt 2$ \\
 7 & $R_{s_1}$ & $s_1=[-2,-4,1,1]$ & $\sqrt 7$, $r=1$ \\
 & $\phi_{Q_0,Q_1}$ & $Q_0=[-3,-3,1,1]$ & $\sqrt 7+i$ \\
 & &  $Q_1=[-3,-5,3,1]$ & $\frac{\sqrt 7+i}2$ \\
 $8$ & $R_{s_1}$ & $s_1=[-4,-4,1,1]$ & $2\sqrt 2+i$, $r=1$ \\
 & $R_{s_2}$ & $s_2=[-8,-16,5,3]$ & $\frac{3}{\sqrt 2}$, $r=\frac{1}{\sqrt 2}$ \\
 & $R_{s_3}$ & $s_3=[-24,-24,11,5]$ & $\sqrt 3+i$, $r=\frac{1}{\sqrt 3}$ \\
 $9$& $R_{s_1}$ &$s_1=[-3,-5,1,1]$ & $3$, $r=1$ \\
 & $R_{s_2}$ & $s_2=[-18,-18, -5,-4]$ & $\frac{8}{3}+i$, $r=\frac 13$ \\
 & $R_{s_3}$ & $s_3=[-5,-6,2,1]$ & $2+\frac 23 i$, $r=\frac 23$ \\
 $10$ & $R_{s_1}$ & $s_1=[-4,-6,1,1]$ & $\sqrt{10}$, $r=\sqrt 2$ \\
 & $-R_{P}$ & $P=[6,6,-3,-1]$ & $\frac{\sqrt {10}}2+i$, $r=\frac 1{\sqrt 2}$ \\
 $11$ & $\phi_{Q_1,Q_0}$ & $Q_1=[-6,-7,2,1]$ & $\frac{2\sqrt{11}+2i}3$ \\
 & & $Q_0=[-5,-5,1,1]$ & $\sqrt{11}+i$ \\
 & $\phi_{Q_1,Q_2}$ & $Q_2=[-7,-6,2,1]$ & $\frac{2\sqrt{11}+4i}{3}$ \\
 & $\phi_{Q_1,Q_3}$ & $Q_3=[-8,-7,4,1]$ & $\frac{2\sqrt{11}+6i}{5}$ \\
 $12$ & $R_{s_1}$ & $s_1=[-6,-6,1,1]$ & $\sqrt{12}+i$, $r=1$ \\
 & $R_{s_2}$ & $s_2=[-6,-9,2,1]$ & $\frac{2\sqrt{12}}3$, $r=\frac{2}{\sqrt{3}}$ \\
 $13$ & $R_{s_1}$ & $s_1=[-5,-7,1,1]$ & $\sqrt{13}$, $r=1$ \\
 & $R_{s_2}$ & $s_2=[-39,-39,7,6]$ & $\frac{12}{\sqrt{13}+i}$, $r=\frac{1}{\sqrt{13}}$ \\
 & $-R_P$ & $P=[-7,-8,2,1]$ & $\frac{2\sqrt{13}+2i}{3}$, $r=\frac 2 3$ \\
 $14$ & $R_{s_1}$ & $s_1=[-6,-8,1,1]$ & $\sqrt{14}$, $r=\sqrt 2$ \\
 & $R_{s_2}$ & $s_2=[-14,-21,5,2]$ & $\frac{4\sqrt{14}}7$, $r=\frac{2}{\sqrt 7}$ \\
 & $R_{s_3}$ & $s_3=[-10,-10,3,1]$ & $\frac{\sqrt{14}}2+i$, $r=\frac{1}{\sqrt 2}$ \\
 & $R_{s_4}$ & $s_4=[-42,-42,9,5]$ & $\frac{5\sqrt{14}}7+i$, $r=\frac{1}{2\sqrt 2}$ \\
$15$ & $\phi_{Q_0,Q_1}$ & $Q_0=[-7,-7,1,1]$ & $\sqrt{15}+i $ \\
&  & $Q_1=[-25,-27,5,3]$ & $\frac{3\sqrt{15}+3i}4$ \\
& $\phi_{Q_2,Q_3}$ & $Q_2,Q_3=[-6\pm 2\sqrt 3,-9,2,1]$ & $\frac{-\sqrt{15}-(3\pm 2\sqrt 3)i}3$ \\
$16$ & $R_{s_1}$ & $s_1=[-8,-8,1,1]$ & $4+i$, $r=1$ \\
& $R_{s_2}$ & $s_2=[-48,-64,9,7]$ & $\frac 72$ $r=\frac 12 $ \\
& $R_{s_3}$ & $s_3=[-80,-80,25,7]$ & $\frac 7 4 +i$, $r=\frac 14 $ \\
& $-R_P$ & $P=[-9,-10,2,1]$& $\frac{8+2i} 3$, $r=\frac 23$ \\
 \hline
 \end{tabular}
 \caption{\label{tab1}  The generators of $\CC_n$.  }
 \end{table}

\begin{table}
\begin{tabular}{|c|c|c|c|c|} \hline
 $n$ & Type & in $\LL_n$ & in $\Bbb C$ \\
\hline
$17$ & $R_{s_1}$ & $s_1=[-7,-9,1,1]$ & $\sqrt{17}$, $r=1$ \\
& $R_{s_2}$ & $s_2=[-68,-68,9,8]$ & $\frac{16\sqrt{17}}{17}+i$, $r=\frac{1}{\sqrt{17}}$ \\
& $\phi_{Q_1,Q_2}$ & $Q_1=[-10,-11,2,1]$ & $\frac{2\sqrt{17}+2i}{3}$ \\
& & $Q_2=[-11,-10,2,1]$ & $\frac{2\sqrt{17}+4i}3$ \\
& $\phi_{Q_1,Q_3}$ & $Q_3=[-6,-11,2,1]$ & $\frac{2\sqrt{17}-2i}3$ \\
& $\phi_{Q_1,Q_4}$ & $Q_4=[-24,-29,4,3]$ & $\frac{6\sqrt{17}+2i}7$ \\
$18$ & $R_{s_1}$ & $s_1=[-8,-10,1,1]$ & $\sqrt{18}$, $r=\sqrt 2$\\
& $R_{s_2}$ & $s_2=[-10,-14,3,1]$ & $\frac{\sqrt{18}}{2}$, $r=\frac{1}{\sqrt 2}$ \\
& $\phi_{Q_1,Q_2}$ & $Q_1=[-12+\sqrt{2},-12-\sqrt 2,3,1]$ & $\frac{\sqrt{18} + (2-\sqrt 2)i}2$ \\
& & $Q_2=[-12-\sqrt{2},-12+\sqrt 2,3,1]$ & $\frac{\sqrt{18} + (2+\sqrt 2)i}2$ \\
$19$ & $-R_P$ & $P=[-11,-12,2,1]$ & $\frac{2\sqrt{19}+2i}{3}$, $r=\frac{2}3$ \\
&$\phi_{Q_1,Q_0}$ & $Q_1=[-21,-22,3,2]$ & $\frac{4\sqrt{19}+4i}5$ \\
&& $Q_0=[-9,-9,1,1]$ & $\sqrt{19}+i$ \\
& $\phi_{Q_1,Q_2}$ & $Q_2=[-22,-21,3,2]$ & $\frac{4\sqrt{19}+6i}5$ \\
& $\phi_{Q_1,Q_3}$ & $Q_3=[-69,-70,11,6]$ & $\frac{12\sqrt{19}+16i}{17}$ \\
$20$ &$R_{s_1}$& $s_1=[-10,-10,1,1]$ & $\sqrt{20}+i$, $r=1$ \\
& $R_{s_2}$& $s_2=[-20,-25,3,2]$ & $\frac{4\sqrt{20}}5$, $r=\frac{2}{\sqrt 5}$ \\
&$R_{s_3}$& $s_3=[-12,-16,3,1]$ & $\sqrt 5$, $r=1$ \\
& $R_{s_4}$ & $s_4=[-36,-36,5,3]$ & $\frac{3\sqrt{5}}2+i$, $r=\frac 12$ \\
& $R_{s_5}$& $s_5=[-40,-40,7,3]$ & $\frac{3\sqrt{20}}5+i$, $r=\frac{1}{\sqrt 5}$ \\
& $R_{s_6}$ & $s_6=[-15,-15,4,1]$ & $\frac{2\sqrt{20}}5+i$, $r=\frac{1}{\sqrt 5}$ \\
$21$ & $R_{s_1}$ & $[-9,-11,1,1]$ & $\sqrt{21}$, $r=1$ \\
& $R_{s_2}$ & $[-105,-105,11,10]$ & $\frac{20\sqrt{21}}{21}$, $r=\frac{1}{\sqrt{21}} $ \\
& $R_{s_3}$ & $[-12,-15,2,1]$ & $\frac{2\sqrt{21}}3$, $r=\frac{2}{\sqrt 3}$ \\
& (see text) & $T$ & $\ss$  \\
$22$ & $R_{s_1}$ & $s_1=[-10,-12,1,1]$ & $\sqrt{22}$, $r=\sqrt 2$ \\
& $-R_P$ & $P=[-13,-14,2,1]$ & $\frac{2\sqrt{22}+2i}3$, $r=\frac 23$ \\
& $\phi_{Q_1,Q_2}$ & $Q_1,Q_2=[-12\pm 2\sqrt 2,-15,3,1]$ & $\frac{\sqrt{22}\pm i\sqrt 2}2$ \\
$23$ & $\phi_{Q_0,Q_1}$ & $Q_0=[-11,-11,1,1]$ & $\sqrt{23}+i$ \\
& & $Q_1=[-63,-65,7,5]$ & $\frac{5\sqrt{23}+5i}{6}$ \\
& $\phi_{Q_2,Q_3}$ & $Q_2=[-14,-15,2,1]$ & $\frac{2\sqrt{23}+2i}3$ \\
& & $Q_3=[-15,-17,3,1]$ & $\frac{\sqrt{23}+i}2$ \\
& $\phi_{Q_2,Q_4}$ & $Q_4=[-15,-14,2,1]$ & $\frac{2\sqrt{23}+4i}3$ \\
& $\phi_{Q_2,Q_5}$ & $Q_5=[-10,-15,2,1]$ & $\frac{2\sqrt{23}-2i}{3}$ \\
& $\phi_{Q_3,Q_6}$ & $Q_6=[-17,-15,3,1]$ & $\frac{\sqrt {23}+3i}2$ \\
& $\phi_{Q_3,Q_7}$ & $Q_7=[-11,-17,3,1]$ & $\frac{\sqrt{23}-i}2$ \\
 \hline
 \end{tabular}
 \caption{\label{tab2}  The generators of $\CC_n$, continued.  }
 \end{table}

\begin{table}
\begin{tabular}{|c|c|c|c|c|} \hline
 $n$ & Type & in $\LL_n$ & in $\Bbb C$ \\
\hline
$24$ & $R_{s_1}$ & $s_1=[-12,-12,1,1]$ & $\sqrt{24}+i$, $r=1$ \\
& $R_{s_2}$ & $s_2=[-120,-144,11,13]$ & $\frac{11\sqrt{24}}{12}$, $r=\frac{1}{\sqrt 6}$ \\
& $R_{s_3}$ & $s_3=[-264,-264,29,19]$ & $\frac{19\sqrt{24}}{24}+i$, $r=\frac{1}{2\sqrt 6}$ \\
& $\phi_{Q_1,Q_2}$ & $Q_1,Q_2=[-12\pm 2\sqrt 3,-15,2,1]$ & $\frac{4\sqrt 6\pm 2\sqrt 3 i}3$ \\
& $\phi_{Q_3,Q_4}$ & $Q_3,Q_4=[-18\pm \sqrt{3},-18,5\pm \sqrt{3},1]$ & $\frac{8\sqrt 6\mp 4\sqrt 2+(10\pm 2\sqrt 3)i}{11}$ \\
$25$ & $R_{s_1}$ & $s_1=[-11,-13,1,1]$ & $5$, $r=1$ \\
& $R_{s_2}$ & $s_2=[-150,-150,13,12]$ & $\frac{24}5+i$, $r=\frac 15$ \\
& $-R_{P_1}$ & $P_1=[-15,-16,2,1]$ & $\frac{10+2i}3$, $r=\frac 23$ \\
& $-R_{P_2}$ & $P_2=[-28,-29,3,2]$ & $4+\frac 4 5 i$, $r=\frac 25 $ \\
& $-R_{P_3}$ & $P_3=[-18,-19,4,1]$ & $2+\frac 45 i$, $r=\frac 25 $ \\
$26$ & $R_{s_1}$ & $s_1=[-12,-14,1,1]$ & $\sqrt{26}$, $r=\sqrt 2$ \\
& $\phi_{Q_1,Q_2}$ & $Q_1=[-16,-17,2,1]$ & $\frac{2\sqrt{26}+2i}3$ \\
&& $Q_2=[-17,-16,2,1]$ & $\frac{2\sqrt{26}+4i}3$ \\
&$\phi_{Q_1,Q_3}$ & $Q_3=[-12,-17,2,1]$ & $\frac{2\sqrt{26}-2i}3$ \\
& $\phi_{Q_1,Q_4}$ & $Q_4=[-28,-31,3,2]$ & $\frac{4\sqrt{26}+2i}5$ \\
& $\phi_{Q_5,Q_6}$ & $Q_5,Q_6=[-18\mp \sqrt 2, -18\pm \sqrt 2,3,1]$ & $\frac{\sqrt{26}+(2\pm \sqrt 2)i}2$ \\
 \hline
 \end{tabular}
 \caption{\label{tab3}  The generators of $\CC_n$, continued.  }
 \end{table}

\begin{bibdiv}

\begin{biblist}

\bib{Bar18}{article}{
   author={Baragar, Arthur},
   title={Higher dimensional Apollonian packings, revisited},
   journal={Geom. Dedicata},
   volume={195},
   date={2018},
   pages={137--161},
   issn={0046-5755},
   review={\MR{3820499}},
   doi={10.1007/s10711-017-0280-7},
}

\bib{Bia92}{article}{
   author={Bianchi, Luigi},
   title={Sui gruppi di sostituzioni lineari con coefficienti appartenenti a
   corpi quadratici immaginar\^{\i}},
   language={Italian},
   journal={Math. Ann.},
   volume={40},
   date={1892},
   number={3},
   pages={332--412},
   issn={0025-5831},
   review={\MR{1510727}},
   doi={10.1007/BF01443558},
}

\bib{Boy73}{article}{
   author={Boyd, David W.},
   title={The osculatory packing of a three dimensional sphere},
   journal={Canadian J. Math.},
   volume={25},
   date={1973},
   pages={303--322},
   issn={0008-414X},
   review={\MR{320897}},
   doi={10.4153/CJM-1973-030-5},
}

 \bib{Boy74}{article}{
   author={Boyd, David W.},
   title={A new class of infinite sphere packings},
   journal={Pacific J. Math.},
   volume={50},
   date={1974},
   pages={383--398},
   issn={0030-8730},
   review={\MR{350626}},
}
	
\bib{CCS19}{article}{
  author={Chait, Debra},
  author={Cui, Alisa},
  author={Stier, Zachary},
  title={A taxonomy of crystallographic sphere packings},
  eprint={arXiv:1903.03563v1},
  status={to appear},
  year={2019},
}
	
\bib{GLM06}{article}{
   author={Graham, Ronald L.},
   author={Lagarias, Jeffrey C.},
   author={Mallows, Colin L.},
   author={Wilks, Allan R.},
   author={Yan, Catherine H.},
   title={Apollonian circle packings: geometry and group theory. II.
   Super-Apollonian group and integral packings},
   journal={Discrete Comput. Geom.},
   volume={35},
   date={2006},
   number={1},
   pages={1--36},
   issn={0179-5376},
   review={\MR{2183489}},
   doi={10.1007/s00454-005-1195-x},
}
	
\bib{GM10}{article}{
   author={Guettler, Gerhard},
   author={Mallows, Colin},
   title={A generalization of Apollonian packing of circles},
   journal={J. Comb.},
   volume={1},
   date={2010},
   number={1, [ISSN 1097-959X on cover]},
   pages={1--27},
   issn={2156-3527},
   review={\MR{2675919}},
   doi={10.4310/JOC.2010.v1.n1.a1},
}

\bib{KN19}{article}{
   author={Kontorovich, Alex},
   author={Nakamura, Kei},
   title={Geometry and arithmetic of crystallographic sphere packings},
   journal={Proc. Natl. Acad. Sci. USA},
   volume={116},
   date={2019},
   number={2},
   pages={436--441},
   issn={0027-8424},
   review={\MR{3904690}},
   doi={10.1073/pnas.1721104116},
}
	
\bib{Man91}{article}{
   author={Manna, S. S.},
   author={Herrmann, H. J.},
   title={Precise determination of the fractal dimensions of Apollonian
   packing and space-filling bearings},
   journal={J. Phys. A},
   volume={24},
   date={1991},
   number={9},
   pages={L481--L490},
   issn={0305-4470},
   review={\MR{1117857}},
}
	
\bib{Max82}{article}{
   author={Maxwell, George},
   title={Sphere packings and hyperbolic reflection groups},
   journal={J. Algebra},
   volume={79},
   date={1982},
   number={1},
   pages={78--97},
   issn={0021-8693},
   review={\MR{679972}},
   doi={10.1016/0021-8693(82)90318-0},
}

\bib{McM}{webpage}{
  author={McMullen, Curtis T.},
  title={Kleinian groups},
  url={http://people.math.harvard.edu/~ctm/programs/index.html},
}

\bib{Mor84}{article}{
   author={Morrison, D. R.},
   title={On $K3$ surfaces with large Picard number},
   journal={Invent. Math.},
   volume={75},
   date={1984},
   number={1},
   pages={105--121},
   issn={0020-9910},
   review={\MR{728142}},
   doi={10.1007/BF01403093},
}
	
\bib{Sod37}{article}{
  author={Soddy, Frederick},
  title={The bowl of integers and the hexlet},
  journal={Nature},
  volume={139},
  year={1937-01-09},
  pages={77--79},
  doi={10.1038/139077a0}
}
	
\end{biblist}
\end{bibdiv}
	
\end{document}